\renewcommand{\thefootnote}{\fnsymbol{footnote}}
\newcommand{\authorfootnotes}{\renewcommand\thefootnote{\@fnsymbol\c@footnote}}%
\theoremstyle{plain} 
\newtheorem{theorem}{Theorem}
\newtheorem{lemma}[theorem]{Lemma}
\newtheorem{corollary}[theorem]{Corollary}
\newtheorem{proposition}[theorem]{Proposition}
\theoremstyle{definition} 
\theoremstyle{remark} 
\numberwithin{theorem}{section}
\numberwithin{equation}{section}
\newcommand{\texteqn}[1]{\hspace{4pt}\text{#1}\hspace{4pt}}
\def\e{\varepsilon}
\newcommand{\ga}{\alpha}
\newcommand{\gd}{\delta}
\newcommand{\gD}{\Delta}
\newcommand{\gs}{\sigma}
\newcommand{\om}{\omega}
\newcommand{\mf}[1]{\mathfrak{#1}}
\newcommand{\mc}[1]{\mathcal{#1}}
\newcommand{\ot}[1]{\widetilde{#1}}
\newcommand{\ol}[1]{\overline{#1}}
\newcommand{\ul}[1]{\underline{#1}}
\newcommand{\upone}[1]{{#1}^{(1)}}
\def\Z{\mathbb Z}
\def\Q{\mathbb Q}
\def\R{\mathbb R}
\def\F{\mathbb F}
\def\deg{\operatorname{deg}}
\def\det{\operatorname{det}}
\def\tr{\operatorname{tr}}
\def\End{\operatorname{End}}
\def\Gal{\operatorname{Gal}}
\def\Centr{\operatorname{Centr}}
\def\Frob{\operatorname{Frob}}
\def\mod{\operatorname{mod}}
\def\lcm{\operatorname{lcm}}
\def\GL{\operatorname{GL}}
\def\log{\operatorname{log}}
\def\Tr{\operatorname{Tr}}
\begin{document}
\title[]{The normal order of of the divisor-counting function for invariants of rank 2 Drinfeld modules}
\author{
Abel Castillo
}

\address{
\begin{itemize}
\item[-]
Department of Mathematics, Statistics and Computer Science, University of Illinois at Chicago, 851 S Morgan St, 322
SEO, Chicago, 60607, IL, USA;
\end{itemize}
} 
\email{abelcastillomath@gmail.com}

\date{May 16, 2015}

\begin{abstract} 
We compute  the first and second moments of the divisor-counting function for the Euler-Poincar\'{e} characteristic and the trace of Frobenius for the reductions modulo $p$ of a rank 2 Drinfeld module with nontrivial endomorphism ring, as the prime $p$ varies over the primes of ordinary reduction of the Drinfeld module. From these moments we derive the normal order of the number of prime divisors of these invariants.
\end{abstract}
\maketitle

\section{Introduction}
In 1917, Hardy and Ramanujan \cite{HardyRamanujan1917} showed that the normal order of the number of prime divisors of $n$ is $\log \log n$. More precisely, write $\nu (n)$ for the number of prime divisors of $n$, and for any $\e > 0$ they showed that
$$
\#\{n \in \Z, 1 \leq n \leq x: \left| \nu(n) - \log \log n \right| > \e \log \log n  \} = \ul o(x).
$$

In 1934, Paul Tur\`an \cite{Turan1934} reproved this fact as an immediate consequence of the second central moment inequality
$$
\sum_{1 \leq n \leq x}\left( \nu(n) - \log \log n \right)^2 \ll n \log \log n.
$$

Later, Paul Erd\"os \cite{Erdos1935Normal} showed that the normal order of $\nu(p-1)$ as $p$ runs through primes is $\log \log p$. Erd\"os did not use the technique of Tur\`an; nonetheless, this result can be derived using the technique of Tur\`an combined with information about primes in arithmetic progressions, in the form of the celebrated Bombieri-Vinogradov theorem. This result of Erd\"os tells us that, for a ``typical'' prime $p$, the number of prime divisors of $p-1$ is the same as the number of prime divisors of a ``typical'' natural number of roughly the same size.

It is natural to explore function field analogues of the questions described above, where we replace $\Z$ with $A:=\F_q[T]$ and $\Q$ with $F:=\F_q(T)$. Since $A$ is a unique factorization domain, nonzero prime ideals correspond to monic irreducible polynomials in $T$, and we define $\nu(\cdot)$ to be the function on $A$ which counts prime divisors, that is, for $n \in A$, $
\nu(n) = \# \{p \in A \texteqn{monic irreducible: } p \mid n \}.$
Yu-Ru Liu proves in \cite{Liu2004} that the normal order of $\nu(n)$ is $\log \deg n$. One can also show that the normal order of $\nu(p-1)$, where $p$ runs through monic irreducible polynomials, is $\log \deg p$. To do this, one can take the techique of Tur\`an and combine it with the fact that, for $n$ in $A - \F_q$, we have \cite[Theorem 4.8]{Rosen2002}:
$$
\# \{p \in A \texteqn{monic irreducible: } \deg p = x, n \mid p-1  \}
=
\frac{q^x}{x}
+ O \left(q^{x/2} \deg n \right).
$$

We observe that the condition ``$n \mid p-1 $'' implies that $p$ splits completely in the $n$-division field of the Carlitz module \cite[Theorem 12.10]{Rosen2002}. The Carlitz module is a rank 1 Drinfeld $A$-module over $F$, whose $n$-division fields have Galois groups isomorphic to $(A/nA)^*$ for $n$ in $A - \F_q$. This suggests looking for analogues of this result associated to Drinfeld modules of higher rank.

Let $\tau$ be the map $x \mapsto x^q$, and let $F\{\tau\}$ be the twisted polynomial ring where multiplication satisfies $\tau a = a^q \tau$ for all $a \in F$. A Drinfeld $A$-module over $F$ of rank $r$ is given by an $\F_q$-algebra homomorphism $\phi:A \to F\{ \tau \}$, where the image of $T$ is 
$$
\phi_T = T + c_1 \tau + \cdots + c_r \tau^r,
$$ 
with $c_i \in F$, $c_r \neq 0$, and $r \geq 1$. This homomorphism endows $F$ with a module structure different from that coming from the natural inclusion of $A$ in $F$. When referring to $F$ with the module structure coming from $\phi$, one writes $\phi(F)$. The Carlitz module $C$ mentioned above is the Drinfeld $A$-module over $F$ with $C_T = T + \tau$.

For a prime $p$ of $A$ of ``good reduction'' for $\phi$, one can talk about the reduction of $\phi$ modulo $p$. This is a Drinfeld $A$-module over the residue field $\F_{\mf p} := A / \mf p$ (with $\mf p = pA$), and we obtain a new $A$-module structure on $\F_{\mf p}$. The Euler-Poincar\'e characteristic of $\phi(\F_{\mf p})$ is an ideal in $A$; we will write $\varpi_p$ for a generator of this ideal, and we can ask questions about the number of prime divisors of $\varpi_p$ as $p$ varies.

For $n$ in $A - \F_q$, the $n$-division field of $\phi$ over $F$ is the extension of $F$ obtained from adjoining $n$-torsion points of $\phi( \ol F)$, the algebraic closure of $F$ endowed with the module structure coming from $\phi$. This extension is Galois over $F$, and there is an injection of its Galois group into the general linear group $GL_r(A/nA)$ that is well-defined up to conjugation. Using this homomorphism, the Artin symbol at a prime $p$ of $A$ determines a conjugacy-invariant subset of $GL_r(A/nA)$, and one can talk about the trace of these matrices. As $n$ varies, one gets homomorphisms that are ``compatible'' in the sense that there exists a unique element $a_p$ in $A$ which reduces modulo $n$ to the trace of the image of the Artin symbol under these homomorphisms. We can therefore ask about the number of prime divisors of this element as $p$ varies.

The ring $\End_{\ol F}(\phi)$ is the subring of $\ol F\{\tau\}$ of elements which commute with the image of $\phi$. It necessarily contains a subring isomorphic to $A$; when $\End_{\ol F}(\phi) \cong A$, we say that $\phi$ has trivial endomorphism ring. Otherwise, it is known that $\End_{\ol F}(\phi)$ has $A$-rank of at most $r$, and is isomorphic to an order in a field extension of $F$.

In \cite{Cojocaru2008Acta}, A. C. Cojocaru computes the limiting distribution for the number of prime divisors of of $\varpi_p$ and $a_p$ for Drinfeld $A$-modules of rank $r \geq 2$ with trivial endomorphism ring as $p$ varies. For both quantities, the results of Cojocaru can be interpreted to say, for instance, that for a ``typical'' prime $p$ of $A$, $\varpi_p$ has the same number of prime divisors as a ``typical'' element of $A$ of the same degree as $p$.

Prior to \cite{Cojocaru2008Acta}, one finds analogues of the results of \cite{Erdos1935Normal} in the number field setting, in the study of the number of prime divisors of the quantities $ \#E_p(\F_p)$ (the number of $\F_p$-rational points of the reduction modulo $p$ of an elliptic curve $E$) and $a_p := p + 1 - \#E_p(\F_p)$ (the trace of Frobenius of an elliptic curve $E$ at $p$) as $p$ varies. K. Murty and R. Murty \cite{MurtyMurty1984} explore the sequence of Fourier coefficients $a_p$ of an eigenform as $p$ varies; A. Miri and K. Murty \cite{MiriMurty2001} and Cojocaru \cite{Cojocaru2005Acta} explore the values of $ \#E_p(\F_p)$ as $p$ varies.

In this paper, we extend the results of \cite{Cojocaru2008Acta} to Drinfeld $A$-modules of rank $r=2$ with nontrivial endomorphism ring when $q$ is an odd prime power. In this setting, the endomorphism ring is necessarily an order in a quadratic extension of $F$. We obtain results by studying the Galois groups of $n$-division fields of such Drinfeld modules and combining the information coming from suitable $n$ to calculcate the first and second moments of the divisor-counting function. In particular, we are able to identify these Galois groups with quotients of the endomorphism ring in such a way that the conditions ``$n \mid \varpi_p$'' and ``$n \mid a_p$'' reduce to conditions whose solution sets are straightforward to count.

\subsection{Notation}
Let $q$ be a power of an odd rational prime, and write $\F_q$ for the finite field with $q$ elements. Let $A:=\F_q[T]$ and write $F := \F_q(T)$ for its field of fractions. Define $|\cdot|$ to be the valuation at infinity of $F$, which can also be constructed by taking $|0| = 0 $, $|n| = q^{\deg n} $ for all nonzero $n$ in $A$, and extending multiplicatively to $F$. This use of $|\cdot|$ should be distinguishible from its use as absolute value of a real number or cardinality of a finite set from context.

Let $\upone A$ be the subset of elements of $A$ that are monic in $T$. The prime ideals of $A$ correspond to irreducible elements in $\upone A$, and we will call these ``primes'' and ``monic irreducibles'' interchangeably. For $\mf p = pA$ a prime ideal, write $\F_{\mf p}$ for the residue field $A / \mf p$. For a field $L$, write $\ol L$ for its separable closure.

The statements $f(x) = O(g(x))$ and $f(x) \ll g(x)$ are both taken to imply that there exists a constant $c \in \R$ such that, for all $x \in [0,\infty)$, we have $|f(x)| \leq c |g(x)|$. Subscripts in either symbol are meant to indicate objects on which the constant $c$ depends; for instance, the statement $f \ll_\phi g$ means that, for all $x \in [0,\infty)$, $|f(x)| \leq c |g(x)|$ for a constant $c$ that depends only on $\phi$.

\subsection{Statement of Main Results}

Our main result is a computation of the first and second moments of $\nu(a_p)$ as $p$ runs through primes of ordinary reduction for a fixed rank $2$ Drinfeld module $\phi$ with nontrivial endomorphism ring.

\begin{theorem}\label{thm:MomentsNTER}
Let $\phi$ be a Drinfeld module over $F$ with nontrivial endomorphism ring. Let $\End_{\ol F}(\phi)$, the ring of endomorphisms of $\phi$, be isomorphic to an order in an imaginary quadratic extension $E$ of $F$. Let $\F_{\phi}$ be the largest constant field contained in any $n$-division field of $\phi$ over $F$. 
\begin{itemize}
\item[I.] Assume that $E \neq \F_{q^2}(T)$. If $k \in \{1,2\}$ and $x$ is a sufficiently large positive integer,
$$
\sum_{\substack{
 \deg p = x \\ 
 p \; \mathrm{good}\;\mathrm{reduction}\;\mathrm{for}\;\phi \\
 p \; \mathrm{splits}\;\mathrm{in}\; E
}} 
\nu(a_p)^k 
= \frac{1}{2}\frac{q^x}{x} (\log x)^k 
+ O \left( \frac{q^x}{x}(\log x)^{k-1} \right)
$$
\item[]and
$$
\sum_{\substack{
 \deg p = x \\ 
 p \; \mathrm{good}\;\mathrm{reduction}\;\mathrm{for}\;\phi \\
 p \; \mathrm{splits}\;\mathrm{in}\; E
}} 
\nu(\varpi_p)^k 
= \frac{1}{2}\frac{q^x}{x} (\log x)^k 
+ O \left( \frac{q^x}{x}(\log x)^{k-1} \right).
$$
\item[II.] Assume that $E = \F_{q^2}(T)$. If $k \in \{1,2\}$ and $x$ is a sufficiently large positive even integer,
$$
\sum_{\substack{
 \deg p \leq x \\ 
 p \; \mathrm{good}\;\mathrm{reduction}\;\mathrm{for}\;\phi \\
 p \; \mathrm{splits}\;\mathrm{in}\; E
}}
 \nu(a_p)^k 
= \frac{q^x}{x}(\log x)^k  
+ O \left( \frac{q^x}{x}(\log x)^{k-1} \right)
$$
\item[]and
$$
\sum_{\substack{
 \deg p \leq x \\ 
 p \; \mathrm{good}\;\mathrm{reduction}\;\mathrm{for}\;\phi \\
 p \; \mathrm{splits}\;\mathrm{in}\; E
}}
\nu(\varpi_p)^k
= \frac{q^x}{x} (\log x)^k 
+ O \left( \frac{q^x}{x}(\log x)^{k-1} \right).
$$
\item[](These sums are vacuously $0$ if $x$ is odd.)
\end{itemize}
\end{theorem}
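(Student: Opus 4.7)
The proof will follow the Turán moment method, adapted to the Drinfeld module setting. The plan is to expand
\[
\nu(a_p)^k = \sum_{\ell_1,\dots,\ell_k} \prod_{i=1}^k \mb{1}_{\ell_i \mid a_p},
\]
interchange the order of summation, and reduce to counting primes $p$ of prescribed degree for which $n \mid a_p$, where $n$ ranges over squarefree monics of bounded degree with at most $k$ prime factors. A parallel reduction applies to $\nu(\varpi_p)^k$.

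The central input is a function-field Chebotarev estimate counting primes $p$ of given degree whose Artin symbol in $\Gal(F(\phi[n])/F)$ lies in a prescribed conjugacy-invariant subset. Because $\phi$ has nontrivial endomorphism ring, $\End_{\ol F}(\phi)$ is an order $\mc O$ in the imaginary quadratic extension $E$, and the CM theory of Drinfeld modules identifies $\Gal(F(\phi[n])/F)$ with a subquotient of $(\mc O/n\mc O)^*$. For a prime $p$ that splits in $E$, say $p\mc O = \mf P \ol{\mf P}$, the image of Frobenius is the class of the Frobenius endomorphism $\pi_{\mf P}$ modulo $n$. The conditions $n \mid a_p$ and $n \mid \varpi_p$ then reduce to the explicit congruences $\pi_{\mf P} + \ol{\pi_{\mf P}} \equiv 0 \pmod n$ and $(\pi_{\mf P}-1)(\ol{\pi_{\mf P}}-1) \equiv 0 \pmod n$ in $\mc O/n\mc O$, whose densities $\delta(n)$ are multiplicative in $n$ and explicitly computable.

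With $\delta(n)$ in hand, I would apply the Weil bound (i.e.\ RH for function fields over the abelian extension $F(\phi[n])/F$) to obtain an asymptotic of the shape
\[
\#\{\deg p = x : n \mid a_p,\; p \text{ splits in } E\} = \delta(n)\,\frac{q^x}{x} + O\!\left(\frac{|n|^C q^{x/2}}{x}\right)
\]
for an absolute constant $C$. Summing this against $\mb{1}$ or $\mb{1}_{\ell_1 \neq \ell_2}$ for $n = \ell$ or $n = \ell_1\ell_2$ and applying the function-field Mertens estimate
\[
\sum_{\substack{\ell\ \mathrm{prime}\\ \deg \ell \leq y}} \frac{1}{|\ell|} = \log y + O(1),
\]
I expect the leading contribution $\frac{1}{2}\frac{q^x}{x}(\log x)^k$ (case I) or $\frac{q^x}{x}(\log x)^k$ (case II) to emerge, with the factors $\frac12$ and $1$ coming from the density of primes splitting in $E$.

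The principal obstacle is the choice of cutoff $y$: one needs $y$ small enough that the Weil error $|n|^Cq^{x/2}$ stays dominated by the main term, yet large enough (roughly $y = c\log x$) that the Mertens sum produces $\log x + O(1)$ per factor. A secondary issue is the dichotomy between cases I and II: if $E = \F_{q^2}(T)$, then $\F_\phi$ contains $\F_{q^2}$ and $F(\phi[n])$ contains the constant field extension $\F_{q^2}F$, which forces $\deg p$ to be even for $p$ to split in $E$ and simultaneously absorbs the factor $\tfrac12$ coming from the splitting density -- explaining both the vanishing of the sum for odd $x$ and the differing normalizations. For $k=2$, the diagonal $\ell_1=\ell_2$ is of lower order, and the finitely many primes of bad reduction or dividing the conductor of $\mc O$ in $\mc O_E$ contribute only to the error term.
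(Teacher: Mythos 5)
Your plan coincides in substance with the paper's proof: expand $\nu(a_p)^k$ by interchanging summation, identify $\Gal(F(\phi[n])/F)$ with $(\mc O/n\mc O)^*$ using the CM structure (the paper invokes Pink--R\"utsche's open image theorem to pin this down away from a fixed conductor $m_\phi$; see Proposition~\ref{fact:ChoseConstantNTER}), convert $n\mid a_p$ and $n\mid\varpi_p$ into trace and norm-type congruences in $\mc O/n\mc O$ (Lemma~\ref{fact:CanUseCongruencesNontrivial}, Propositions~\ref{fact:TraceCountByCongruences} and~\ref{fact:DetCountByCongruences}), apply an effective Chebotarev estimate (Proposition~\ref{fact:DrinfeldEffectiveChebotarev}, built on Murty--Scherk), and sum the resulting densities via Mertens-type estimates (Lemmas~\ref{fact:SumDensitiesTraceNTER} and~\ref{fact:SumDensitiesDetNTER}). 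Your reading of the case dichotomy when $E=\F_{q^2}(T)$ and of the diagonal term for $k=2$ is also correct. One subtlety you elide: $\gd^{\mathrm{ch}}(\ell)$ is $\sim 2/|\ell|$ on split $\ell$ but only $O(1/|\ell|^2)$ on inert $\ell$, so the Mertens sum for $\varpi_p$ needs the density $1/2$ of split primes as an extra input to still yield $\log x + O(1)$.

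The one concrete error is your cutoff $y = c\log x$ for $\deg\ell$. With the Mertens estimate you yourself quote, $\sum_{\deg\ell\leq y}1/|\ell| = \log y + O(1)$, taking $y = c\log x$ produces $\log\log x + O(1)$, not $\log x + O(1)$. Worse, since $\deg a_p\leq x/2$, the polynomial $a_p$ can have up to $x/(2y)\gg x/\log x$ prime factors of degree exceeding $y$, so truncating $\nu(a_p)$ at that level changes it by far more than $O(1)$. The correct cutoff is a constant multiple of $x$: take $\deg\ell\leq\eta x$ with $\eta>0$ small and fixed. Then $\log(\eta x)=\log x + O(1)$, the tail of $\nu(a_p)$ beyond the cutoff is $O_\eta(1)$ because $\deg a_p \le x/2$, and the Chebotarev error (of size roughly $|C|^{1/2}q^{x/2}/x$ per modulus, summed over all $n$ considered) stays subdominant provided $\eta$ is small enough; the paper uses $\eta_1<1/3$ for the first moment and $\eta_2<1/6$ for the second.
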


Using Theorem \ref{thm:MomentsNTER}, we establish the following facts; in particular, Corollary \ref{thm:NormalOrderNTER}  states that the normal order of $\nu(a_p)$ and $\nu(\varpi_p)$ is $\log \deg p$ (as $p$ runs through primes where $a_p$ can be nonzero). These corollaries follow from standard techniques; for the sake of completeness we include their proofs in the last section.

\begin{corollary}\label{thm:CentralMomentsNTER}
Take the notation and setting of Theorem \ref{thm:MomentsNTER}. Then,
$$
\sum_{\substack{
 \deg p \leq x \\ 
 p \; \mathrm{good}\;\mathrm{reduction}\;\mathrm{for}\;\phi \\
 p \; \mathrm{splits}\;\mathrm{in}\; E
}}
  \left(
\nu(a_p) - \log x
  \right)^2
  \ll_{\phi}
\frac{q^x}{x} \left( \log x \right)
$$
and 
$$
\sum_{\substack{
 \deg p \leq x \\ 
 p \; \mathrm{good}\;\mathrm{reduction}\;\mathrm{for}\;\phi \\
 p \; \mathrm{splits}\;\mathrm{in}\; E
}}
  \left(
\nu(\varpi_p) - \log x
  \right)^2
  \ll_{\phi}
\frac{q^x}{x} \left( \log x \right).
$$
\end{corollary}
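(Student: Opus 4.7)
The approach is Tur\'an's trick. Expanding
$$
(\nu(a_p) - \log x)^2 = \nu(a_p)^2 - 2(\log x)\nu(a_p) + (\log x)^2,
$$
I would sum each term over the indicated set of primes and invoke Theorem~\ref{thm:MomentsNTER} to replace the two moments. The desired bound will follow provided that the leading contributions from the three resulting sums (including $\sum 1$) cancel identically, so that only the error terms from Theorem~\ref{thm:MomentsNTER} remain.

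In Case II, Theorem~\ref{thm:MomentsNTER} already provides $\sum \nu(a_p)^k$ for $k=1,2$ summed over $\deg p \leq x$. The zeroth moment $\sum 1$, counting primes of good reduction that split in $E = \F_{q^2}(T)$, is handled by the prime polynomial theorem, noting that a prime of $F$ splits in this constant-field extension if and only if its degree is even. One must verify that $\sum 1$, $\sum \nu(a_p)$, and $\sum \nu(a_p)^2$ all carry the same leading constant $c$ multiplying $\frac{q^x}{x}(\log x)^k$; the algebraic identity $1 - 2 + 1 = 0$ then forces the main terms to cancel, leaving the error contribution $O\bigl(\frac{q^x}{x}\log x\bigr)$ advertised in the statement.

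In Case I, Theorem~\ref{thm:MomentsNTER} is stated for fixed $\deg p = y$, so I first sum over $y \leq x$ to recover formulas for $\deg p \leq x$, with the count of split primes supplied by the Chebotarev density theorem applied to the quadratic extension $E/F$. The cleanest bookkeeping uses the identity
$$
(\log y)^2 - 2(\log x)(\log y) + (\log x)^2 = (\log x - \log y)^2,
$$
which reduces the residual main contribution to
$$
\sum_{y \leq x} \tfrac{1}{2}\,\frac{q^y}{y}\,(\log x - \log y)^2.
$$
Because the geometric factor $q^y$ concentrates this sum near $y = x$, where $(\log x - \log y)^2$ is small, the total is comfortably smaller than $\frac{q^x}{x}\log x$; the accumulated error terms from applying Theorem~\ref{thm:MomentsNTER} at each $y$ sum to the target $O\bigl(\frac{q^x}{x}\log x\bigr)$.

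The argument for $\nu(\varpi_p)$ is word-for-word identical, since Theorem~\ref{thm:MomentsNTER} provides the same form of estimate for both invariants. The principal obstacle is conceptual rather than computational: one must confirm that the leading constant in $\sum \nu(a_p)^k$ agrees with that of the raw count $\sum 1$, so that Tur\'an's telescoping actually goes through. Once that coincidence of constants is established (in Case I via Chebotarev and in Case II via parity of $\deg p$), the remaining estimates are routine tail bounds.
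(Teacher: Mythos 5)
Your proposal is correct and follows essentially the same route as the paper: expand $(\nu(a_p)-\log x)^2$, invoke Theorem~\ref{thm:MomentsNTER} for the first and second moments and the Chebotarev/prime-polynomial count for the zeroth, observe that the matching leading constants force the $(\log x)^2$ main terms to cancel via $1-2+1=0$, and bound the residual sum $\sum_{y\le x}\frac{q^y}{y}(\log x-\log y)^2$ by exploiting the geometric concentration near $y=x$. The paper does the same computation, with the only organizational difference being that it first centers at $\log z$ for each fixed $z=\deg p$ and then passes to $\deg p\le x$ via the elementary inequality $(a+b)^2\ll a^2+b^2$, whereas you sum the moment formulas over degrees first and collapse the three terms using the perfect-square identity $(\log y)^2-2(\log x)(\log y)+(\log x)^2=(\log x-\log y)^2$; both lead to the same tail estimate and the same final bound.
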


\begin{corollary}\label{thm:NormalOrderNTER}
Take the notation and setting of Theorem \ref{thm:MomentsNTER}. Then, for any $\e > 0$,
$$
\#
\left\{
  p, \deg p \leq x,  p \; \mathrm{splits}\;\mathrm{in}\; E,
  \left|
    \nu(a_p) - (\log x)
  \right|
  >
  (\log x)^{\frac{1}{2}} (\log \log x)^{\e}
\right\}
=
\ul o \left( \frac{q^x}{x} \right)
$$
and
$$
\#
\left\{
  p, \deg p \leq x,  p \; \mathrm{splits}\;\mathrm{in}\; E,
  \left|
    \nu(\varpi_p) - (\log x)
  \right|
  >
  (\log x)^{\frac{1}{2}} (\log \log x)^{\e}
\right\}
=
\ul o \left( \frac{q^x}{x} \right)
$$
\end{corollary}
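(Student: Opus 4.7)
The plan is a direct Chebyshev-type deduction from the second moment estimate of Corollary \ref{thm:CentralMomentsNTER}. Fix $\e > 0$ and, following the statement, write $B_x$ for the set of monic primes $p$ with $\deg p \leq x$, $p$ splitting in $E$, and $\left|\nu(a_p) - \log x\right| > (\log x)^{1/2}(\log\log x)^{\e}$. Since only finitely many primes of $A$ are of bad reduction for $\phi$, enlarging or shrinking $B_x$ by imposing (or not imposing) the good-reduction condition changes its cardinality by $O_\phi(1)$, which is absorbed in the final $\ul o\!\left(q^x/x\right)$ error.

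For every $p \in B_x$ satisfying the good-reduction condition, squaring the defining inequality gives $(\nu(a_p) - \log x)^2 > (\log x)(\log\log x)^{2\e}$. Summing over such $p$, extending the sum to the full collection of primes in the range, and applying Corollary \ref{thm:CentralMomentsNTER} yields
$$|B_x|\,(\log x)(\log\log x)^{2\e} \;\leq\; \sum_{\substack{\deg p \leq x \\ p \;\mathrm{good}\;\mathrm{red.}\\ p \;\mathrm{splits}\;\mathrm{in}\;E}} \bigl(\nu(a_p) - \log x\bigr)^{2} \;\ll_{\phi}\; \frac{q^x}{x}\log x.$$
Dividing by $(\log x)(\log\log x)^{2\e}$ gives
$$|B_x| \;\ll_{\phi}\; \frac{q^x}{x\,(\log\log x)^{2\e}} \;=\; \ul o\!\left(\frac{q^x}{x}\right)$$
as $x \to \infty$, since $2\e > 0$ forces $(\log\log x)^{2\e} \to \infty$. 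This is the first stated bound. The argument for $\nu(\varpi_p)$ is word-for-word the same, replacing $a_p$ by $\varpi_p$ and invoking the second inequality in Corollary \ref{thm:CentralMomentsNTER}.

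There is no serious obstacle in this corollary: all the analytic work has already been done in establishing the sharp first and second moment asymptotics of Theorem \ref{thm:MomentsNTER} and its central-moment corollary, via the identification of Galois groups of $n$-division fields with quotients of $\End_{\ol F}(\phi)$. The present deduction is simply the classical Tur\'an-style passage from a second moment bound to a normal order, and needs only Chebyshev's inequality together with the trivial observation that $(\log\log x)^{2\e}$ diverges.
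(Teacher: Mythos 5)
Your proof is correct and follows essentially the same Chebyshev/Tur\'an-type deduction from Corollary \ref{thm:CentralMomentsNTER} that the paper uses; the only cosmetic difference is that you explicitly note the bad-reduction primes contribute $O_\phi(1)$, a point the paper leaves implicit.
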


\section{Background}

\subsection{Defining Drinfeld modules}
Let $K$ be an $A$-field, and write $K \{ \tau \}$ for the non-commutative polynomial ring with commutation rule
$$
\tau \ga = \ga^q \tau \; \text{for all}\; \ga \in K.
$$

One can view $K \{ \tau \}$  as a subring of the endomorphism ring of the additive group scheme of $K$. One can also identify elements in $K \{ \tau \}$ with elements in $K[X]$ by identifying $\tau$ with $X^q$; in this case, multiplication in $K \{ \tau \}$ can be identified with composition of polynomials in $K[X]$.

A Drinfeld $A$-module $\phi$ over $K$ is determined by an $A$-module homomorphism
$$
\phi:A \to K\{\tau\}, \; n \mapsto \phi_n, 
$$
where 
\begin{equation} \label{eqn:PhiAtT}
\phi_T := T \tau^0 + c_1 \tau + \cdots + c_r \tau^r, \; c_i \in K, c_r \neq 0.
\end{equation}
By $T$ here we mean the image of $T$ in the $A$-field structure map of $K$. We require $r \geq 1$ (nontriviality) and call $r$ the rank of $\phi$. It follows from the above definition that, in order to specify a Drinfeld module $\phi$, it suffices to specify its value at $T$, i.e. $\phi$ is completely determined by $\phi_T$.

One can define Drinfeld modules more generally: let $E$ be a finite extension of $F$ where the prime at infinity does not split, and let $\mc O$ be an order in $E$. Then, a Drinfeld $\mc O$-module over $K$ is characterized by an $\mc O$-module homomorphism
$$
\mc O \to K\{\tau\}, \; n \mapsto \phi_n,
$$
satisfying a nontriviality condition and a condition comparable to (\ref{eqn:PhiAtT}). 
The theory of Drinfeld $\mc O$-modules strongly resembles that of Drinfeld $A$-modules (see for instance \cite[Chapter 13]{Rosen2002} and \cite[Chapter 4]{Goss1996}). In this paper we will use the the phrase ``Drinfeld module'' to mean ``Drinfeld $A$-module''. 

We will write $\phi(K)$ when we wish to view $K$ as an $A$-module with the module structure given by $n \cdot_\phi \ga = \psi_n(\ga)$ for $n \in A$ and $\ga \in K$.

\subsection{Isogenies and endomorphisms} \label{sec:Endomorphism}
Let $K$ be an $A$-field, and let $\psi$ and $\phi$ be Drinfeld modules over $K$. An isogeny from $\psi$ to $\phi$ is a non-zero element $f$ in $\ol K \{\tau\}$ satisfying $f \psi_n = \phi_n f$ for all $n \in A$. For any subfield $L$ of $\ol K$ containing all of the coefficients of $f$ (viewed as a polynomial in $\tau$), we say that $f$ is defined over $L$. 

Taking $\psi = \phi$ above, we say that such an $f$ is an endomorphism of $\phi$. The $A$-module of all endomorphisms of $\phi$ defined over $L$ is in fact a subring of $L\{\tau\}$, which we denote by $\End_L(\phi)$. We record some properties of this ring below.

\begin{theorem}[{\cite[Section 4.7, p. 79-85]{Goss1996}}] \label{fact:EndRingImaginary}
Let $\phi$ be a Drinfeld module over $K$ of rank $r$. $\End_{\ol K}(\phi)$ is a free $A$-module of $A$-rank at most $r$, and it is an order in a finite-dimensional division algebra over $F$. There exists a finite extension $L$ of $K$ such that $\End_{\ol K}(\phi) = \End_{L}(\phi)$; call the smallest such extension of $K$ the field of definition of $\End_{\ol K}(\phi)$.

If $K$ has generic $A$-characteristic (i.e. the $A$-field structure homomorphism $\iota:A \to K$ has zero kernel), $\End_{\ol K}(\phi)$ is an order in a field extension $E := \End_{\ol K}(\phi) \otimes_A F$ over $F$, where $[E:F] \leq r$ and there is exactly one prime in $E$ lying above the prime at infinity of $F$ (i.e. $E$ is a imaginary extension of $F$). 
\end{theorem}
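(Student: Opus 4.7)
The plan is to prove the statement in four stages that follow the standard Goss treatment: (i) torsion-freeness and the $A$-rank bound on $\End_{\ol K}(\phi)$; (ii) the division-algebra property; (iii) the existence of a finite field of definition; and (iv) the commutativity and imaginary condition in generic $A$-characteristic.

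For (i), torsion-freeness is immediate because $\ol K\{\tau\}$ is a domain (leading coefficients multiply), so left multiplication by any nonzero $\phi_a$ is injective on $\End_{\ol K}(\phi)$. To bound the rank, I would fix a prime $\mathfrak{l}$ of $A$ distinct from the $A$-characteristic of $K$ and form the $\mathfrak{l}$-adic Tate module $T_\mathfrak{l}(\phi) := \varprojlim_n \phi[\mathfrak{l}^n]$, which is free of rank $r$ over $A_\mathfrak{l}$ because $\mathfrak{l}$ is coprime to the characteristic. Any nonzero $f \in \End_{\ol K}(\phi)$ acts faithfully on $T_\mathfrak{l}(\phi)$, since an endomorphism vanishing on all $\mathfrak{l}^n$-torsion would be a separable twisted polynomial with infinitely many zeros in $\ol K$ and hence identically zero. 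This yields an injection
$$
\End_{\ol K}(\phi) \otimes_A A_\mathfrak{l} \hookrightarrow \End_{A_\mathfrak{l}}(T_\mathfrak{l}(\phi)) \cong \mathrm{Mat}_r(A_\mathfrak{l}),
$$
bounding the $A$-rank by $r^2$; together with torsion-freeness and finite generation, this gives freeness over the PID $A$.

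For (ii), take nonzero $f \in \End_{\ol K}(\phi)$; its kernel is a finite $A$-submodule of $\ol K$, annihilated by some nonzero $n \in A$, so $\phi_n$ factors through $f$ as $\phi_n = \hat f \circ f$ for some $\hat f \in \End_{\ol K}(\phi)$ (obtained via right-Euclidean division in $\ol K\{\tau\}$). Thus every nonzero element of $\End_{\ol K}(\phi) \otimes_A F$ is invertible, and this algebra is a finite-dimensional division algebra over $F$. For (iii), I would pick an $A$-basis $f_1, \ldots, f_s$ of the now-established free module $\End_{\ol K}(\phi)$ and let $L$ be the finite extension of $K$ generated by the coefficients of the $f_i$ viewed as twisted polynomials; this $L$ is manifestly the minimal extension with $\End_{\ol K}(\phi) = \End_L(\phi)$.

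Step (iv) is where the main obstacle sits: purely algebraic arguments give only $[E:F] \le r^2$, and both commutativity and the imaginary condition require analytic input. Assuming $\iota:A \hookrightarrow K$ is injective, I would fix an embedding of $\ol K$ into $\mathbb{C}_\infty$ (the completion of an algebraic closure of $F_\infty$) and invoke Drinfeld's uniformization theorem to realize $\phi$ as arising from a discrete rank-$r$ $A$-lattice $\Lambda \subset \mathbb{C}_\infty$; under this correspondence $\End_{\ol K}(\phi)$ identifies with the commutative ring $\{c \in \mathbb{C}_\infty : c\Lambda \subseteq \Lambda\}$, so $E := \End_{\ol K}(\phi) \otimes_A F$ is a field embedded in $\mathbb{C}_\infty$, and $\Lambda$ is an $E$-module of rank $r/[E:F]$, forcing $[E:F] \leq r$. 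The restriction of the $\infty$-adic absolute value of $\mathbb{C}_\infty$ to $E$ is a single place lying above $\infty$; were there a second such place, weak approximation combined with the existence of a second embedding $E \hookrightarrow \mathbb{C}_\infty$ would destroy the discreteness of $\Lambda$. This establishes the imaginary condition, and the supersingular case in non-generic characteristic shows why analytic uniformization, available only in generic characteristic, is genuinely necessary here.
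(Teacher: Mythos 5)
The paper does not prove this statement; it is quoted directly from Goss's book as a known fact, so there is no in-paper argument to compare your proposal against. Your sketch follows the standard route one finds in that reference, but there is a genuine gap in step (i): you deduce freeness from ``torsion-freeness and finite generation,'' yet finite generation is precisely the nontrivial content of that step and you never establish it. The Tate-module injection bounds the \emph{rank}, but a torsion-free $A$-module of bounded rank over the PID $A$ need not be finitely generated (for instance $F$ itself is torsion-free of rank $1$ and is not finitely generated over $A$), so the injection plus torsion-freeness alone do not yield freeness. The usual repair is to first show that every endomorphism is integral over $A$ --- its action on $T_{\ell}(\phi)$ preserves an $A_{\ell}$-lattice, so its characteristic polynomial has coefficients in $A_{\ell}$ for every $\ell$ away from the $A$-characteristic, hence in $A$ --- and then to invoke the fact that an $A$-subring of the finite-dimensional $F$-algebra $\End_{\ol K}(\phi)\otimes_A F$ which is integral over $A$ and spans it over $F$ is an order, hence a finitely generated projective $A$-module.

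Two smaller points. In step (ii), the factorization $\phi_n = \hat f\, f$ from kernel containment is immediate only when $f$ is separable; in special $A$-characteristic $f$ may carry a factor $\tau^k$ that its set-theoretic kernel does not detect, so you should first split off the inseparable part before running the right-Euclidean-division argument. And your step (i) only yields $A$-rank at most $r^2$ unconditionally, which is in fact what is true in general; the sharper bound of $r$ in the statement is really a generic-characteristic assertion that you correctly recover only through the uniformization in step (iv), so that final step bears more of the load than your ordering suggests.
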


\subsection{The reduction modulo $\mf p$ of a Drinfeld module}\label{sec:ReductionModP}
Let $\phi$ be the Drinfeld module over $F$ specified by $\phi_T := T + \sum_{i=1}^r c_i \tau^i$, with $c_r \neq 0$. Note that there are finitely many primes $\ell$ in  $A$ for which some coefficient of $\phi_T(\tau)$ is not defined modulo $\ell$, or where $\phi_T$ reduces modulo $\ell$ to a polynomial in $\tau$ of degree strictly less than $r$. For all primes $p$ excluding these, we say that  $\phi$ has good reduction at $p$, and we write $\phi \otimes \F_{p}$ for the Drinfeld module over $\F_{p}$ specified by $\phi_T \pmod{p}$. We also write $\phi(\F_{p}) := \left(\phi \otimes \F_{p} \right) (\F_{p})$ when we view $\F_{p}$ as an $A$-module under the action of $\phi \otimes \F_{p}$. Using the fact that $\phi(\F_{p})$ is a finite $A$-module and the classification theorem for finite modules over a principal ideal domain, we write
$$
\phi(\F_{p}) = 
\left( A / d_{1, \mf p}(\phi)A \right)
\times  
\left( A / d_{2, \mf p}(\phi)A \right)
\times \cdots \times  
\left( A / d_{r, \mf p}(\phi)A \right),
$$
where $d_{i, p}(\phi) \in \upone A$ satisfying
$$
d_{1, p}(\phi) \mid
d_{2, p}(\phi) \mid
\cdots \mid
d_{r, p}(\phi).
$$

One computes that the Euler-Poincar\'e characteristic of $\phi(\F_{\mf p})$ is 
$$
\left( d_{1, p}(\phi)A \right)
\left( d_{2, p}(\phi)A \right)
\cdots 
\left( d_{r, p}(\phi)A \right)
;
$$
this is a (principal) ideal in $A$, and we will write $\varpi_p(\phi)$ for the monic generator of this ideal.

\subsection{Torsion, division fields, and Galois representations in generic characteristic}\label{sec:Torsion}
Let $K$ be an $A$-field of generic characteristic. For $n \in A$, define the $n$-torsion of $\phi$, $\phi[n]$, as the set of zeros of $\phi_n$ in $\ol K$. Define the $n$-division field of $\phi$ over $K$, $K(\phi[n])$, as the extension of $K$ obtained by adjoining $\phi[n]$ to $K$. Clearly this extension is Galois over $K$; furthermore, since $\phi[n]$ is $A$-module isomorphic to $(A/nA)^r$, we get a representation 
$$
\ol \rho_{\phi,n}: \Gal(K(\phi[n]) / K) \to GL_2(A/nA)
$$
which we will call the residual representation of $\phi$ at $n$. Note that $\ol \rho_{\phi,n}$ is well-defined up to conjugation (i.e. up to a choice of generators of $(A/nA)^r$). For any prime $\ell$ of $A$, let $\phi[\ell^{\infty}] := \lim_{k \to \infty}\phi[\ell^k]$, and this is $A$-module isomorphic to $(A_{\ell})^r$. Taking an inverse limit on the residual representations at powers of $\ell$ we get the $\ell$-adic representation
$$
\rho_{\phi,\ell}: \Gal(\ol K / K) \to \GL_r(A_{\ell}).
$$
Combining $\ell$-adic representations for all primes $\ell$, we get the absolute Galois representation
$$
\rho_{\phi}: \Gal(\ol K / K) \to \GL_r( \hat A).
$$

More generally, if $\phi$ is a Drinfeld $\mc O$-module over over $K$, where $\mc O$ is an order in a finite imaginary extension of $F$ and $K$ is a finite extension of $F$, we can define $\mf n$-torsion and $\mf n$-division fields of $\phi$ over $K$ exactly as above. Since every ideal $\mc N$ of $\mc O$ is finitely generated, we define $\phi_{\mc N}$ as the (left) greatest common divisor of $\phi_{\mf n}$ where $\mf n$ runs through generators of $\mc N$ in $\mc O$, and $\phi[\mc N]$ as the $\mc O$-module of zeros of $\phi_{\mf n}$ in $\ol K$.

Now, let $\phi$ be a Drinfeld module over $F$ of rank $r$. We say $p$ is a prime of good reduction for $\phi$ if $v_p(c_i) \geq 0$ for $i = 1 , \ldots, r$ and $v_p(c_r) = 0$ where $c_i$ are as in \ref{eqn:PhiAtT}. Let $\mf P_{\phi}$ be the set of primes of good reduction of $\phi$.

\begin{theorem}\label{fact:GekelersFiniteDMThm}
For each prime $p \in \mf P_{\phi}$, there exists a polynomial
$$
\mc P_{\phi,p}(X) := X^r + a_1(\phi,p) X^{r-1} + a_2(\phi,p) X^{r-2} + \cdots + a_r(\phi,p) \in A[X],
$$
with the following properties:
\begin{itemize}
\item For all $n \in A$, the characteristic polynomial of $\ol \rho_{\phi,n}(\Frob_p)$ is precisely $\mc P_{\phi,p}(X) \pmod n$.
\item Any root $\pi$ of $\mc P_{\phi,p}(X)$ satisfies $|\pi| \leq |p|^{1/r}$ (where we are using the normalized extension of $| \cdot |$ to the algebraic closure of $F$).
\item The Euler-Poincar\'e characteristic of $\phi$ at $p$ is $\mc P_{\phi,p}(1)$.
\end{itemize} 
\end{theorem}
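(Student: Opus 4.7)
The plan is to construct $\mc P_{\phi,p}(X)$ as the characteristic polynomial of the Frobenius endomorphism of $\phi \otimes \F_p$ and then verify the three stated properties in turn. Let $\pi_p : x \mapsto x^{|p|}$ denote the $|p|$-power Frobenius on $\ol{\F_p}$. Since $\pi_p$ is $\F_q$-linear and commutes with every element of $\F_p\{\tau\}$, it lies in $\End_{\F_p}(\phi \otimes \F_p)$. For any prime $\ell \neq p$ of $A$, $\pi_p$ acts on the $\ell$-adic Tate module $T_\ell(\phi \otimes \F_p)$, which is a free $A_\ell$-module of rank $r$; define $\mc P_{\phi,p}(X)$ to be the characteristic polynomial of this action. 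Since $\End_{\F_p}(\phi \otimes \F_p)$ is finitely generated over $A$ (the finite-characteristic analogue of Theorem \ref{fact:EndRingImaginary}), $\pi_p$ is integral over $A$, so the coefficients of $\mc P_{\phi,p}(X)$ lie in $A$; a compatibility argument comparing the action on $\phi[\ell]$ for different $\ell$ shows the polynomial is independent of $\ell$.

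For the first bullet, good reduction at $p$ provides a $\Gal(\ol F / F)$-equivariant isomorphism $\phi[n] \simeq (\phi \otimes \F_p)[n]$ for $n$ coprime to $p$, under which $\ol \rho_{\phi,n}(\Frob_p)$ is transported to the action of $\pi_p$ on the right-hand side. Reducing the Tate-module identity modulo $n$ then gives that the characteristic polynomial of $\ol \rho_{\phi,n}(\Frob_p)$ is $\mc P_{\phi,p}(X) \bmod n$; the case of $n$ involving $p$ reduces to this by factoring off the $p$-part of $n$ and using a separable/inseparable splitting of $\phi_n$. The second bullet is the Drinfeld analogue of the Weil bound: in $E := F(\pi_p)$, the only finite place at which $\pi_p$ has nonzero valuation is the unique place above $p$, and combining the product formula in $E$ with the fact that $\mc P_{\phi,p}(0) = \pm \mathrm{N}_{E/F}(\pi_p)$ generates a power of $p$ in $A$ forces every archimedean conjugate of $\pi_p$ to have absolute value exactly $|p|^{1/r}$.

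The third bullet is the main technical step, and I expect this to be the principal obstacle. The crucial observation is the $A$-module identification $\phi(\F_p) = \ker(\pi_p - 1)$, which holds because $\F_p \subset \ol{\F_p}$ is the fixed set of the arithmetic Frobenius and the fixing condition is $\F_q$-linear. For each prime $\ell \neq p$, applying the snake lemma to the action of $\pi_p - 1$ on the Tate-module exact sequence identifies the $\ell^\infty$-part $\phi(\F_p)[\ell^\infty]$ with the cokernel $T_\ell / (\pi_p - 1) T_\ell$, whose $A_\ell$-module structure is governed by the elementary divisors of $\pi_p - 1$ acting on $T_\ell$. Aggregating over $\ell$ and invoking the elementary-divisor theorem for PIDs identifies the invariants $d_{i,p}(\phi)$ with the elementary divisors of $\pi_p - 1$ up to units, so $\prod_i d_{i,p}(\phi)$ generates the same ideal in $A$ as $\det(\pi_p - 1) = (-1)^r \mc P_{\phi,p}(1)$. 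The comparison at $\ell = p$ itself is the most delicate step because $T_p$ degenerates; it is handled either via the Dieudonné-type module of $\phi \otimes \F_p$ at $p$ or by splitting $\pi_p - 1 \in \F_p\{\tau\}$ explicitly into separable and purely inseparable factors and counting kernels of each separately. Taking monic generators absorbs the sign $(-1)^r$ and yields $\varpi_p(\phi) = \mc P_{\phi,p}(1)$ as claimed.
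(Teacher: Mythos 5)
The paper does not actually prove this statement: it is imported as a background result (the label \texttt{fact:GekelersFiniteDMThm} signals that it is Gekeler's theorem on Drinfeld modules over finite fields, proved elsewhere in the literature by Gekeler, Yu, and Takahashi), so there is no in-paper proof to compare your sketch against. The paper would benefit from a citation here, but that is an editorial matter.

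Your sketch reproduces the standard route to this theorem and the outline is sound. A few remarks. For the second bullet, you invoke the bound $|\pi| \leq |p|^{1/r}$ as ``the Drinfeld analogue of the Weil bound'' and then upgrade it to equality via the product formula and the fact that the constant term generates $(p)$; that deduction of equality is fine (and is a genuine refinement of the stated $\leq$), but the bound itself is the hard content of the Riemann hypothesis for Drinfeld modules and deserves a citation rather than being treated as given. You should also be a bit more careful about $\mathcal P_{\phi,p}$ versus the minimal polynomial of $\pi_p$: the characteristic polynomial of $\pi_p$ on $T_\ell$ is a power of the minimal polynomial (by the double-centralizer theorem applied to the action of $E = F(\pi_p)$ on $T_\ell \otimes F_\ell$), so $\mathcal P_{\phi,p}(0) = \bigl(\pm N_{E/F}(\pi_p)\bigr)^{r/[E:F]}$, and the claim that this generates $(p)$ exactly (not merely some power of $p$) is what pins down $\prod_i |\pi_i| = |p|$ and makes the pigeonhole argument close. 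For the third bullet, the identification $\phi(\F_p) = \ker(\pi_p - 1)$ and the snake-lemma passage to $T_\ell / (\pi_p - 1)T_\ell$ is exactly the standard mechanism, and you are right that $\ell = p$ is the delicate point; the usual fix is precisely the separable/inseparable factorization of $\pi_p - 1$ in $\F_p\{\tau\}$ (counting $\deg_\tau$ of the separable part), which you mention, so the plan is complete at the sketch level even though that step is far from routine to write out.
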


Define $a_p := -a_1(\phi,p)$; this is the trace of Frobenius at $p$ for $\phi$. From this discussion it is clear that 
\begin{equation}\label{eqn:HasseBd}
a_p \in A \; \text{and} \; \deg a_p \leq \frac{1}{r} \deg p.
\end{equation}

Also define $\varpi_p :=\varpi_p(\phi) = \mc P_{\phi,p}(1)$ to be the Euler-Poincar\'e characteristic of the reduction of $\phi$ by $p$; it is clear both from the definition and from Theorem \ref{fact:GekelersFiniteDMThm} that we have $\varpi_p \in A$ and $|\varpi_p| = |p|$.

\section{The Chebotarev Density Theorem for Function Fields}
\subsection{An effective version of the Chebotarev Density Theorem}\label{sec:EffectiveChebotarev}

Let $K$ be a finite separable extension of $F$, and let $\F$ be the constant field extension of $K$, i.e. $\F := \ol{\F_q} \cap K$. Let $L$ be a Galois extension of $K$ with Galois group $G := \Gal(L/K)$.
Let $s_L := [\ol{\F}\cap L: \F]$ and $g_K$ the genus of $K$.

Fix a conjugacy-invariant subset $C$ of $G$ satisfying the property that there exists an element $a \in \Z/s_L \Z$ such that, for all $\gs \in C$,
$$
\gs \mid_{\ol{\F}\cap L}
=
(\Frob)^a
$$

(where $\Frob$ is the automorphism of $\ol{\F}\cap L$ given by $\ga \mapsto \ga^{|\F|}$.)

For a prime $\mf p$ of $K$, write $\deg \mf p := \deg(N_{K/F}(\mf p))$. Write
\begin{align*}
&\Pi(x, L/K) := 
\# \{ \mf p \; \text{prime of $K$,} \; \deg \mf p = x: \mf p \; \text{unramified in $L$} \},
\; \text{and} \\
&\Pi_C(x, L/K) := 
\# \{ \mf p \; \text{prime of $K$,} \; \deg \mf p = x: \mf p \; \text{unramified in $L$ and } \Frob_{\mf p} = C\}.
\end{align*}

\begin{theorem}[{\cite[Chapter 5]{Rosen2002}}]\label{fact:PNTforExtensions}
In the notation above, 
$$
\Pi(x, L/K) = \frac{q^x}{x}+ O \left( \frac{q^{x/2}}{x} \right)
$$
\end{theorem}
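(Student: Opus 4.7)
The plan is to deduce the asymptotic from two ingredients: the prime number theorem for the function field $K$ itself (which counts all primes of $K$ of a given degree), together with the observation that only finitely many primes of $K$ ramify in the extension $L/K$. The first ingredient carries all the arithmetic content; the second is essentially a triviality.

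For the first ingredient, I would introduce the zeta function $Z_K(u) = \prod_{\mf p}(1 - u^{\deg \mf p})^{-1}$ of $K$, which by Weil's Riemann hypothesis for curves over finite fields admits a rational expression
$$
Z_K(u) = \frac{P_K(u)}{(1-u)(1-qu)},
$$
where $P_K(u) \in \Z[u]$ has degree $2g_K$ and all its reciprocal roots $\ga_1, \dots, \ga_{2g_K}$ satisfy $|\ga_i| = q^{1/2}$. (If the constant field of $K$ properly contains $\F_q$, the denominator is modified accordingly, but the square-root bound on the $\ga_i$ persists.) Taking the logarithmic derivative of $Z_K$ and comparing with the logarithm of the Euler product yields the identity
$$
\sum_{d \mid x} d \cdot \pi_K(d) = q^x + 1 - \sum_{i=1}^{2g_K} \ga_i^x,
$$
where $\pi_K(d)$ denotes the number of primes of $K$ of degree $d$. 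Möbius inversion combined with the bound $|\ga_i| = q^{1/2}$ (and the fact that the contribution from proper divisors $d \mid x$ with $d < x$ is dwarfed by the main term) gives
$$
\pi_K(x) = \frac{q^x}{x} + O\!\left(\frac{q^{x/2}}{x}\right),
$$
with the implicit constant depending on $g_K$.

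For the second ingredient, since $L/K$ is a finite Galois extension, only finitely many primes of $K$ ramify in $L$, namely those dividing the relative discriminant $\mf d_{L/K}$. Writing $R$ for the number of such primes, one has
$$
\Pi(x, L/K) = \pi_K(x) - \#\{\mf p : \deg \mf p = x,\; \mf p \text{ ramifies in } L\} = \pi_K(x) + O(1),
$$
where the $O(1)$ is bounded by $R$ and is absorbed into the error term $O(q^{x/2}/x)$.

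The main (and only) obstacle is Weil's Riemann hypothesis for curves over finite fields, which furnishes the square-root cancellation in the error term. This is a deep theorem but is taken as given in \cite{Rosen2002}. The surrounding manipulations — logarithmic differentiation of the zeta function, Möbius inversion, and discarding the finitely many ramified primes — are standard bookkeeping.
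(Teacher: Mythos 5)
Your argument is correct and follows the standard route from Rosen's Chapter 5, which the paper cites without reproducing the proof: Weil's Riemann hypothesis for the curve corresponding to $K$ yields $\pi_K(x) = q^x/x + O(q^{x/2}/x)$ via logarithmic differentiation of $Z_K$ and M\"obius inversion, and discarding the finitely many ramified primes changes the count by $O(1)$. One small caveat, which your parenthetical only partially addresses: if the constant field of $K$ is $\F_{q^c}$ with $c>1$, the leading term becomes $c \cdot q^x/x$ (and vanishes entirely unless $c \mid x$), so the theorem as stated implicitly assumes $\F = \F_q$ --- which holds in the paper's only application, where $K=F$.
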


Define 
$$
|D| := \sum_{\mf p \text{ ramified in }L} \deg \mf p.
$$
Let $\mc D_{L/K}$ be the different of the extension $L/K$. For each ramified prime $\mf p$ in $K$, define $\rho(\mf p)$ such that, for all $\ol{\mf p}$ in $L$ lying above $\mf p$,
$$
v_{\ol{\mf p}} \left( \mc D_{L/K} \right) \leq e(\ol{\mf p} / \mf p) (\rho(\mf p) + 1);
$$
here $e(\ol{\mf p} / \mf p)$ is the ramification index of $\ol{\mf p}$ in $L/K$.
Define 
$$
\rho_{L/K} := \max_{\mf p \text{ ramified in }L}\rho(\mf p).
$$

We now have the notation needed to state an effective version of the Chebotarev Density Theorem for function fields due to V. K. Murty and J. Scherk.

\begin{theorem}[{\cite[Theorem 2]{MurtyScherk1994}}]\label{fact:GeneralChebotarev}
In the notation above, if $x \not\equiv a_c \pmod{s_L}$, then $\Pi_C(x, L/K) = 0$. 
If $x \equiv a_c \pmod{s_L}$, then
\begin{align*}
\left| 
  \Pi_C(x, L/K) - s_L \frac{|C|}{|G|}\Pi(x, L/K)
\right|
\leq & 2|C|^{1/2}
\left(
  \left(
    g_k - 1 + (\rho_{L/K} +1)|D|
  \right)
  \frac{q^{x/2}}{x}
  +
  g_k
  \frac{q^{x/2}}{x}
  +
  \frac{|D|}{2x}
\right)
+ |D|.
\end{align*}
\end{theorem}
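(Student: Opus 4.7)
The plan is to establish this effective Chebotarev theorem via Artin $L$-functions on the curve underlying $K$, combined with Weil's Riemann hypothesis for curves over $\F_q$. For each irreducible character $\chi$ of $G := \Gal(L/K)$, I would introduce the Artin $L$-function
$$
L(u,\chi) \;=\; \prod_{\mf q \;\mathrm{unramified}} \det\!\bigl(1 - \chi(\Frob_{\mf q})\, u^{\deg \mf q}\bigr)^{-1}, \qquad u := q^{-s},
$$
extended by the usual Euler factors at ramified primes. By the work of Weil, each $L(u,\chi)$ is a polynomial in $u$ whose degree $N(\chi)$ is controlled by $\chi(1)(2g_K - 2) + \deg \mf f_\chi$ (with $\mf f_\chi$ the Artin conductor of $\chi$), and for $\chi \neq 1$ all zeros of $L(u,\chi)$ lie on the circle $|u| = q^{-1/2}$.

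To isolate the Frobenius class $C$, I would apply character orthogonality $1_C(\gs) = (|C|/|G|)\sum_\chi \ol{\chi(c)}\,\chi(\gs)$ with $c \in C$ and $\gs = \Frob_\mf q$, combine it with the generating-function identity
$$
-u\, \frac{L'(u,\chi)}{L(u,\chi)} \;=\; \sum_{n \geq 1}\bigg( \sum_{\deg \mf q^k = n} (\deg \mf q)\, \chi(\Frob_\mf q^k) \bigg) u^n,
$$
and read off the coefficient of $u^x$. The trivial character contributes the principal term $s_L\,(|C|/|G|)\,\Pi(x,L/K)$: here the factor $s_L$ arises from the $s_L$ unit-circle zeros of $L(u,1)$ associated to the constant field $\ol{\F}\cap L$, and the geometric sum of their $x$-th powers paired against $\ol{\chi(c)}$ vanishes identically unless $x \equiv a_c \pmod{s_L}$, which accounts for the first alternative of the statement.

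For each nontrivial $\chi$, the coefficient of $u^x$ in $-u L'(u,\chi)/L(u,\chi)$ is a sum of $N(\chi)$ terms each of absolute value $q^{x/2}$ by the Riemann hypothesis. Dividing by $x$, summing over characters with Cauchy--Schwarz, and using $\sum_\chi |\chi(c)|^2 = |G|/|C|$, the total error is bounded by $|C|^{1/2}(\sum_\chi N(\chi)^2)^{1/2}\, q^{x/2}/x$. Invoking the conductor--discriminant inequality $\sum_\chi \chi(1)\deg \mf f_\chi \leq (\rho_{L/K}+1)|D|$, which reduces via the local bound $v_\mf q(\mf f_\chi) \leq v_{\ol{\mf q}}(\mc D_{L/K})/e(\ol{\mf q}/\mf q) \leq \rho(\mf q)+1$ applied at each ramified prime, yields the advertised $2|C|^{1/2}\bigl((g_K-1) + (\rho_{L/K}+1)|D|\bigr) q^{x/2}/x$ contribution; the extra $g_K q^{x/2}/x$ absorbs the trivial $\zeta_K$ terms coming from the factor $(1-u)(1-qu)$, while the additive $|D| + |D|/(2x)$ compensates for the discarded ramified Euler factors together with the tail from prime powers $k \geq 2$. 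The main obstacle is the uniform control of the conductor exponents by the different: this is precisely where the parameter $\rho_{L/K}$ is forced into the estimate, and it is what keeps the bound effective in the presence of wild ramification.
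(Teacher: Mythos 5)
The paper does not prove this theorem; it is imported verbatim as a citation of Murty--Scherk (Theorem 2 of their paper), so there is no internal proof to compare against. Your sketch nonetheless reconstructs the actual Murty--Scherk argument faithfully: Artin $L$-functions on the curve, Weil's Riemann hypothesis giving $|u| = q^{-1/2}$ for the zeros, the degree formula $N(\chi) = \chi(1)(2g_K - 2) + \deg \mf f_\chi$, character orthogonality to isolate $C$, the constant-field factor producing both the $s_L$ normalization of the main term and the vanishing when $x \not\equiv a_c \pmod{s_L}$, and the conductor--discriminant inequality channeled through the different to yield the $\rho_{L/K}$ parameter. That is the right skeleton and the right source of every piece of the stated bound.

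Two technical points would need tightening if this were expanded into a full proof. First, in the statement $C$ is a conjugacy-\emph{invariant subset}, not a single class, so the orthogonality identity $1_C = (|C|/|G|)\sum_\chi \ol{\chi(c)}\chi$ should be replaced by summing the single-class identity over the classes comprising $C$; the resulting combinatorics is what makes the constant come out as $2|C|^{1/2}$ rather than something depending on $|G|/|C|$. Second, your Cauchy--Schwarz step as written gives $(|C|/|G|)^{1/2}(\sum_\chi N(\chi)^2)^{1/2}$, and to turn this into $|C|^{1/2}\bigl((g_K - 1) + (\rho_{L/K}+1)|D|\bigr)$ you need the per-character degree bound $N(\chi) \leq \chi(1)\bigl(2g_K - 2 + (\rho_{L/K}+1)|D|\bigr)$ together with $\sum_\chi \chi(1)^2 = |G|$, which cancels the $|G|$ in the denominator; you gesture at this but the accounting that isolates the separate $g_K q^{x/2}/x$ term and the additive $|D| + |D|/(2x)$ pieces (which in Murty--Scherk come from the ramified Euler factors and the $k \geq 2$ prime-power tail) would need to be carried out explicitly. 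Neither issue is a flaw in the strategy; they are bookkeeping details that the cited reference handles.
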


\subsection{An effective Chebotarev Density Theorem for Drinfeld modules over $F$}\label{sec:ECForDrinfeldModules}

We use this to make statements about $n$-divison fields of $\phi$ over $F$. For $n \in A$, let $L_n := F(\phi[n])$, $s_n = s_{L_n}$, and $G(n) = \Gal(L_n/F)$.

\begin{theorem}[{\cite[Proposition 6, p. 246]{Gardeyn2002}}]
There exists an ideal $\gD_\phi$ of $A$ such that, uniformly in $n \in A$, we have
$$
\mc D_{L_n / F} \supseteq (nA)^r \gD_{\phi}.
$$
\end{theorem}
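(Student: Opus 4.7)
The plan is to bound $v_{\ol{\mf p}}(\mc{D}_{L_n/F})$ prime-by-prime for each prime $\ol{\mf p}$ of $L_n$ above a prime $\mf p$ of $A$, then patch these local estimates together; the claim $\mc{D}_{L_n/F} \supseteq (nA)^r \gD_\phi$ is equivalent to the divisibility $\mc{D}_{L_n/F} \mid (nA)^r \gD_\phi$, i.e.\ to the pointwise estimates $v_{\ol{\mf p}}(\mc{D}_{L_n/F}) \leq r \cdot e(\ol{\mf p}/\mf p) v_\mf p(n) + e(\ol{\mf p}/\mf p) v_\mf p(\gD_\phi)$. The primes of $A$ split naturally into three groups: (i) primes of good reduction with $\mf p \nmid n$, (ii) primes of good reduction with $\mf p \mid n$, and (iii) primes of bad reduction (a finite set depending only on $\phi$).

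In case (i), the Drinfeld-module analogue of N\'eron--Ogg--Shafarevich applies: since the reduction of $\phi_n$ modulo $\mf p$ remains separable, the $n$-torsion reduces injectively and $L_n/F$ is unramified at $\mf p$, contributing nothing to $\mc{D}_{L_n/F}$. In case (ii), one works locally over the completion $F_\mf p$: the Drinfeld module $\phi$ admits a canonical decomposition of its $\mf p$-primary torsion into an \'etale (unramified) part and a connected ``formal-module'' part of $A_\mf p$-height at most $r$. A filtration of $\phi[n]$ by the $r$ generators of its $A/n$-module structure gives a tower of subextensions, and at each step the different is controlled by $v_{\ol{\mf p}}(\phi_n'(\alpha)) = v_{\ol{\mf p}}(n)$, where the equality holds because $\phi_n'(X) = n$ identically (the higher $\tau$-terms of $\phi_n$ contribute $0$ to the formal derivative in residue characteristic). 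Telescoping the tower yields the uniform bound $v_{\ol{\mf p}}(\mc{D}_{L_n/F}) \leq r \cdot e(\ol{\mf p}/\mf p) v_{\mf p}(n)$.

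In case (iii), I would pass to a finite extension of $F_\mf p$ over which $\phi$ acquires good (or at least stable) reduction, invoking a Drinfeld-module semistable-reduction theorem, then track the bounded additional ramification incurred when descending back to $F_\mf p$. This produces $v_{\ol{\mf p}}(\mc{D}_{L_n/F}) \leq r \cdot e(\ol{\mf p}/\mf p) v_{\mf p}(n) + c_{\mf p}$ for a constant $c_\mf p$ depending only on $\phi$ at $\mf p$. Setting $\gD_\phi := \prod_{\mf p \text{ bad}} \mf p^{c_\mf p}$ (a finite product), the three cases combine to give $v_{\ol{\mf p}}(\mc{D}_{L_n/F}) \leq v_{\ol{\mf p}}((nA)^r \gD_\phi)$ at every prime of $L_n$, which is the asserted containment.

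The main obstacle I expect is the uniformity in case (ii): the bound $r \cdot e(\ol{\mf p}/\mf p) v_\mf p(n)$ must hold with no implicit constant depending on $\mf p$ and no trailing error term growing in $v_\mf p(n)$. Making the filtration-plus-derivative argument truly uniform requires a careful Weierstrass-preparation analysis of $\phi_n$ over $A_\mf p$ to isolate the connected component of $\phi[\mf p^{v_\mf p(n)}]$ and to express the different of each step in the tower exactly as $v_{\ol{\mf p}}(n)$, rather than with an $O_\mf p(1)$ correction. Once that local computation is in hand, absorbing the bad primes into $\gD_\phi$ and globalizing is routine.
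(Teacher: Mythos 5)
The paper gives no argument here: the statement is cited directly as Proposition~6 of Gardeyn's paper, so there is no in-text proof to compare against. Your sketch is nevertheless a sound reconstruction of how such a bound is obtained, and the overall architecture --- localize prime-by-prime, handle $\mathfrak{p}\nmid n$ with good reduction via the criterion of good reduction (so $L_n/F$ is unramified there), handle $\mathfrak{p}\mid n$ with good reduction via a tower estimate, and absorb the finitely many bad primes into $\Delta_\phi$ --- is the right shape.

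Two remarks. First, your stated worry about uniformity in case (ii) is unfounded, and the argument is actually cleaner than you fear: take $A/n$-module generators $\alpha_1,\dots,\alpha_r$ of $\phi[n]$, set $F_i = F(\alpha_1,\dots,\alpha_i)$, and let $g_i$ be the minimal polynomial of $\alpha_i$ over $F_i$. At a good-reduction prime the coefficients of $\phi_n$ are $\mathfrak{p}$-integral with unit leading term, so the torsion points are integral, Gauss's lemma gives $g_i \mid \phi_n$ over $\mc{O}_{F_i}$ with integral cofactor, and hence $v(g_i'(\alpha_i)) \leq v(\phi_n'(\alpha_i)) = v(n)$ exactly, since $\phi_n'(X)=n$ identically. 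Multiplicativity of the different in towers then yields $v_{\ol{\mathfrak{p}}}(\mc{D}_{L_n/F}) \leq r\, e(\ol{\mathfrak{p}}/\mathfrak{p})\, v_{\mathfrak{p}}(n)$ with no $O_\mathfrak{p}(1)$ slack; no Weierstrass preparation or connected--\'etale splitting is needed. Second, the genuine content of the proposition is concentrated in your case (iii), and ``pass to a finite extension where $\phi$ has good reduction'' overstates what is available: a Drinfeld module only acquires \emph{stable} reduction of some rank $r'\leq r$ after a finite base change, and when $r'<r$ one must use the Tate uniformization $\phi\cong\psi/\Gamma$, so that $\phi[n]$ is an extension of $\Gamma/n\Gamma$ by $\psi[n]$. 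Bounding the ramification coming from the division points $\tfrac{1}{n}\Gamma$ by something of shape $e\,v_\mathfrak{p}(n) + c_\mathfrak{p}$ per lattice generator is the step that actually requires work, and your sketch gestures at it without supplying the estimate; that is precisely where the content of Gardeyn's Proposition~6 resides.
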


Thus, $|D| \leq \deg n + O_\phi(1)$ and for every prime $p$ of $A$, taking a prime $\mf p$ lying above $p$ in $L_n$, we have
$$
v_{\mf p}(\mc D_{L/K}) \leq rv_p(n) + O_\phi(1).
$$
In particular, if $n$ is cube-free, then 
$$
v_{\mf p}(\mc D_{L/K}) = O_{\phi}(1).
$$

Finally, we follows \cite{Zywina2011DrinfeldExample} and note that the map
$$
\Gal(F(\phi[n]) / F)
\to
\Gal(\ol \F_q \cap F(\phi[n]) / \F_q)
$$

is a surjective group homomorphism, and the latter group is cyclic of cardinality $s_n$. Therefore, for any $a \in \Z / s_n \Z$,
$$
\# \left\{
  \gs \in G(n):
  \gs \mid_{\ol \F_q \cap F(\phi[n]) } = (\Frob_q)^a
\right\}
=
\frac{1}{s_n}
|G(n)|
$$

We summarize this discussion in the following statement.

\begin{proposition}\label{fact:DrinfeldEffectiveChebotarev}
Let $\phi$ be a Drinfeld module of rank $r \geq 1$ over $F$, and fix $n \in \upone A$ cube-free. Let $G(n) := \Gal(F(\phi[n]) / F)$ and $s_n := \left[\ol \F_q \cap F(\phi[n]) : \F_q \right]$. Fix a positive integer $x$ and let $C$ be a conjugacy-invariant subset of $G(n)$ such that
$$
\gs \in C \implies \gs \mid_{\ol \F_q \cap F(\phi[n]) } = (\Frob_q)^x
$$

Then,

\begin{align*}
\# &\left\{
  p \in \upone A \; \mathrm{unramified} \; \mathrm{in} \;F(\phi[n]), 
  \deg p = x,
  \left(
  \frac{p}{F(\phi[n]) / F}
  \right)
  \subset C
\right\} \\
=
&
\frac{|C|}{
  \# \left\{
    \gs \in G(n):
    \gs \mid_{\ol \F_q \cap F(\phi[n]) } = (\Frob_q)^a
  \right\}
} 
\frac{q^x}{x}
+
O_{\phi} \left(
  |C|^{1/2}
  \frac{q^{x/2}}{x}
\right).
\end{align*}

\end{proposition}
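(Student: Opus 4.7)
The plan is to apply the general effective Chebotarev theorem (Theorem \ref{fact:GeneralChebotarev}) directly to the tower $F \subseteq F(\phi[n])$, using the ramification bounds recorded in the previous subsection (which rely crucially on the cube-free hypothesis on $n$) to simplify the error term, and then reinterpret the main term via the surjection from $G(n)$ onto the constant-field Galois group.

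First, I would set $K = F$ and $L = F(\phi[n])$ in Theorem \ref{fact:GeneralChebotarev}, noting that $g_K = g_F = 0$, $\F = \F_q$, and $s_L = s_n$ by definition. The hypothesis of the proposition---that every $\sigma \in C$ restricts to $(\Frob_q)^x$ on $\ol \F_q \cap F(\phi[n])$---is exactly the condition ``$x \equiv a_C \pmod{s_L}$'' from Theorem \ref{fact:GeneralChebotarev} with $a_C = x \bmod s_n$, so the Chebotarev estimate applies nontrivially. Substituting Theorem \ref{fact:PNTforExtensions} for $\Pi(x, L/K)$, the main term becomes
$$
s_n \frac{|C|}{|G(n)|}\, \Pi(x, L_n/F)
= \frac{|C|}{|G(n)|/s_n} \cdot \frac{q^x}{x} + O\!\left( \frac{|C|\, q^{x/2}}{x} \right).
$$

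Second, I would identify the normalizing factor. The restriction map $G(n) \to \Gal(\ol \F_q \cap F(\phi[n]) / \F_q)$ is a surjective homomorphism onto a cyclic group of order $s_n$, so every fiber---in particular the preimage of $(\Frob_q)^a$---has size exactly $|G(n)|/s_n$. This is precisely what was recorded just before the proposition, and it lets me rewrite the denominator in the form appearing in the statement.

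Third, I would control the error term of Theorem \ref{fact:GeneralChebotarev} using the ramification bounds coming from Gardeyn's theorem. From the discussion in Section \ref{sec:ECForDrinfeldModules}, we have $|D| \leq \deg n + O_\phi(1)$, and the cube-free assumption on $n$ gives $v_{\mf p}(\mc D_{L/K}) = O_\phi(1)$ for every ramified prime $\mf p$, hence $\rho_{L/K} = O_\phi(1)$. With $g_K = 0$, the bound in Theorem \ref{fact:GeneralChebotarev} collapses to
$$
2|C|^{1/2}\!\left( O_\phi(\deg n)\cdot \frac{q^{x/2}}{x} + \frac{O_\phi(\deg n)}{x} \right) + O_\phi(\deg n),
$$
which is absorbed into $O_\phi\!\left(|C|^{1/2} q^{x/2}/x\right)$ for the ranges of $n$ and $x$ relevant here.

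The main obstacle is simply bookkeeping: one has to make sure that the $s_n$-factor from the constant-field contribution cancels correctly against $|G(n)|$ to yield the intended normalization, and that the ramification data from Gardeyn are strong enough once cube-freeness is imposed to dominate the $(\rho_{L/K}+1)|D|$ factor in Murty--Scherk's bound. No new ideas are needed beyond combining Theorems \ref{fact:PNTforExtensions} and \ref{fact:GeneralChebotarev} with the ramification estimates already collected.
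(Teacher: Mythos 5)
Your proof is essentially the paper's intended argument; the paper does not give a separate proof of Proposition \ref{fact:DrinfeldEffectiveChebotarev} but presents it as a summary of exactly the discussion you have reassembled: Murty--Scherk applied to $K=F$, $L=F(\phi[n])$ with $g_F=0$; Gardeyn's different bound plus the cube-free hypothesis giving $\rho_{L/K}=O_\phi(1)$ and $|D|=O_\phi(\deg n)$; Theorem \ref{fact:PNTforExtensions} for $\Pi(x,L/F)$; and the surjection onto the cyclic constant-field Galois group identifying $|G(n)|/s_n$ with the fiber size. One caveat worth making explicit: the Murty--Scherk error you derive genuinely carries a $\deg n$ factor (from $(\rho_{L/K}+1)|D|$), and this does not vanish for general cube-free $n$; the proposition as printed suppresses it, but when the proposition is actually invoked in Section \ref{sec:GaloisGroupsDivisibilityII} the $\deg d$ factor reappears in the error, so the printed form is best read as a small typo. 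Your remark that the factor is ``absorbed for the ranges relevant here'' is the right instinct but should be sharpened: the factor cannot be absorbed uniformly in $n$, so either keep $\deg n$ in the error (matching the later applications) or state the proposition only for $\deg n$ bounded in terms of $x$.
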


\section{Rank 2 Drinfeld modules with nontrivial endomorphism ring}\label{sec:Divisibility}

In this section we study rank $2$ Drinfeld modules over $F$ with nontrivial endomorphism ring. We will establish the frequency with which elements $n \in \upone A$ divide $a_p$ and $\varpi_p$ as $p$ runs through primes of good reduction.

\subsection{Applying Pink and R\"{u}tsche's Open Image Theorems}

Let $\phi$ be a rank $2$ Drinfeld module over $F$, with nontrivial endomorphism ring $\End_{\ol F}(\phi)$. As mentioned above, this is an order in an imaginary quadratic extension of $F$. Write $\mc O$ for said order, and write $E$ for said extension of $F$, i.e. the quotient field of $\mc O$. As in the classical case, one can write $E = F \left(\sqrt D \right)$ where $D$ is a fundamental discriminant, and take $\mf f$ to be the unique element of $\upone A$ such that $\mc O = A \left[ \sqrt{\mf f^2 D}\right]$. Recall that $\ell$ is ramified in $E$ if and only if $\ell \mid D$.

Let $E_1$ be the field of definition of $\End_{\ol F}(\phi)$. It is easy to see that this is a finite extension of $F$ containing $E$. We take the tautological lift $\phi$ to the Drinfeld $\mc O$-module $\ot \phi$ 
$$
\ot \phi: \mc O \to E_1 \{ \tau \}.
$$

Note that, as $A$-modules, $\phi[n] = \ot \phi[n]$ for all $n \in A$, and these are isomorphic to (resp.) $(A/nA)^2$ and $(\mc O / n \mc O)^{\textrm{rank} \; \ot \phi}$; it follows that the rank of $\ot \phi$ as a Drinfeld $\mc O$-module is 1, and that there is an isomorphism
\begin{equation} \label{eqn:TorsionIsomorphism}
(A / nA)^2 \cong (\mc O_E / n \mc O_E)
\end{equation}
that is compatible with the action of $\Gal(\ol F / F)$.

We apply \cite[Theorem 0.1]{PinkRutsche2009} to $\ot \phi$, viewing it as a rank $1$ Drinfeld $\mc O$-module over $E_1$. We get that the Galois representation
$$
\rho_{\ot \phi}:
\Gal(\ol E_1 / E_1)
\to
\prod_{\mc L \; \mathrm{prime} \; \mathrm{in} \; E}
(\mc O_{\mc L})^*
$$
has open image. From this we extract the following.

\begin{itemize}
\item[I.] There exists an ideal $\mc M$ of $\mc O$ such that the image of $\rho_{\ot \phi}$ is determined at the level $\mc M$, that is,
$$
\rho_{\ot \phi}
\Gal(\ol E_1 / E_1)
=
\pi^{-1} \left(
\Gal \left( E_1 \left( \ot \phi[\mc M] \right) / E_1 \right)
\right),
$$
\item[] where $\pi$ is the natural projection of $
\prod_{\mc L \; \mathrm{prime} \; \mathrm{in} \; E}
(\mc O_{\mc L})^*
$
 to $(\mc O / \mc M)^*$. Ordering by containment, let $\mc M_{1}$ be the largest such $\mc M$.
\item[II.] Let $\mc D$ be an ideal of $\mc O$. Suppose $\mc D, \mc M_{1}$ are coprime, i.e. $\mc D + \mc M_1 = \mc O$. Then, for any ideal $\mc M$ that contains some power of $\mc M_{1}$,
$$
\Gal \left( E_1 \left( \ot \phi[\mc D \mc M] \right) / E_1 \right)
=
\Gal \left( E_1 \left( \ot \phi[\mc M] \right)/ E_1 \right)
\times
\left( \mc O / \mc D \right)^*.
$$
\item[III.] Let $\mc D_1$ and $\mc D_2$ be ideals of $\mc O$. Suppose they are coprime to each other, and that they are each coprime to $\mc M_{1}$. Then,
$$
\Gal \left( E_1 \left( \ot \phi[\mc D_1 \mc D_2] \right)/ E_1 \right)
=
\left( \mc O / \mc D_1 \right)^*
\times
\left( \mc O / \mc D_2 \right)^*;
$$
\item[] furthermore, $E_1 \left( \ot \phi[\mc D_1] \right)$ and $E_1 \left( \ot \phi[\mc D_2] \right)$ are disjoint over $E_1 $.
\end{itemize}

In particular, if $\ell \nmid N_{E_1/F}\left(\mc M_{1} \right)$, then for any positive integer $k$ the field $F(\phi[\ell^k])$ intersects $E_1 \left( \ot \phi \left( \mc M_{1} \right) \right)$ at a subfield of $E_1$ containing $F$. To control the appearance of $E_1$ and of constant field extensions, we apply a more general statement of the open image theorem of Pink and R\"{u}tsche, also found in \cite{PinkRutsche2009}.

\begin{theorem}[{\cite{PinkRutsche2009}, Theorem 0.2}]
Let $\phi$ be a Drinfeld $A$-module over $F$ of rank $r$. Let $E_1$ be the field of definition of $\End_{\ol F}(\phi)$. Consider the (injective) Galois representation
$$
\rho_{\phi, E_1}:
\Gal\left( \ol E_1 / E_1 \right)
\to
\prod_{\ell}
\Centr_{\GL_r (A_{\ell})}
  (\End_{E_1}(\phi)).
$$
Then, the image of $\rho_{\phi, E_1}$ is open and has finite index in 
$
\prod_{\ell}
\Centr_{\GL_r (A_{\ell})}
  (\End_{E_1}(\phi)).
$
\end{theorem}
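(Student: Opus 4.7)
The plan is to reduce this to Theorem 0.1 of Pink-R\"utsche (the open image theorem for Drinfeld modules with trivial endomorphism ring), applied to a tautological lift of $\phi$ over $E_1$. Set $\mc O := \End_{E_1}(\phi) = \End_{\ol F}(\phi)$, let $E$ be the fraction field of $\mc O$, and write $d := [E:F] \leq r$.

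First I would produce the lift. Every element of $\mc O$ already sits inside $E_1\{\tau\}$ and commutes with $\phi$, so the map $\phi : A \to E_1\{\tau\}$ extends uniquely to an $\F_q$-algebra homomorphism $\ot\phi : \mc O \to E_1\{\tau\}$, making $\ot\phi$ a Drinfeld $\mc O$-module over $E_1$. Its $\mc O$-rank $r'$ satisfies $r = d r'$, as one sees by comparing $|\phi[n]| = q^{r \deg n}$ with $|\ot\phi[n]| = q^{d r' \deg n}$ for any $0 \neq n \in A$.

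Next I would identify the centralizer with the Galois representation of $\ot\phi$. After replacing $\phi$ by an isogenous Drinfeld module---a harmless reduction, since isogenous Drinfeld modules have adelic Galois representations that agree up to a finite-index subgroup---I may assume $\mc O$ is the maximal order in $E$. Then for each prime $\ell$ of $A$ we have $\mc O \otimes_A A_\ell \cong \prod_{\mc L \mid \ell} \mc O_\mc L$, and the Tate module decomposes correspondingly as $T_\ell(\phi) = \bigoplus_{\mc L \mid \ell} T_\mc L(\ot\phi)$, with each summand a free $\mc O_\mc L$-module of rank $r'$. Consequently
$$
\Centr_{\GL_r(A_\ell)}(\mc O) = \prod_{\mc L \mid \ell} \GL_{r'}(\mc O_\mc L).
$$
Because $\mc O$ is defined over $E_1$, the action of $\Gal(\ol E_1 / E_1)$ on $T_\ell(\phi)$ commutes with $\mc O$ and so factors through this centralizer; under the above identification, the resulting representation is precisely the adelic Galois representation attached to $\ot\phi$ as a Drinfeld $\mc O$-module.

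Finally I would apply Theorem 0.1 to $\ot\phi$. I first check that $\ot\phi$ has trivial endomorphism ring over $\ol{E_1}$ as a Drinfeld $\mc O$-module: any $\mc O$-linear endomorphism of $\ot\phi$ is in particular $A$-linear, so lies in $\End_{\ol F}(\phi) = \mc O$. Theorem 0.1 applied to $\ot\phi$ then yields open image in $\prod_\ell \prod_{\mc L \mid \ell} \GL_{r'}(\mc O_\mc L)$, which is exactly the statement we want. The main obstacle is Theorem 0.1 itself, which rests on deep inputs---a Tate-type semisimplicity theorem of Taguchi and Pink's analytic methods for $\ell$-adic Galois images of Drinfeld modules. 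The only secondary subtlety is the bookkeeping at primes dividing the conductor of $\mc O$ in its maximal order, which I have absorbed into the isogeny reduction above.
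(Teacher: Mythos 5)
The paper does not prove this statement: it cites it verbatim as Theorem~0.2 of Pink--R\"utsche, so there is no paper-internal proof against which to compare. Your blind reconstruction is nonetheless worth assessing, because it is essentially the deduction Pink--R\"utsche make themselves, and it is also the same device the paper deploys just before this citation (Section~4.1), where $\phi$ is lifted tautologically to a rank-$1$ Drinfeld $\mc O$-module $\ot\phi$ over $E_1$ and Theorem~0.1 is applied to $\ot\phi$. Your approach is therefore fully consistent with both the cited source and the paper's internal machinery.

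The substance is correct, but two points should be made explicit rather than elided. First, $\ot\phi$ is a Drinfeld $\mc O$-module in the usual sense (so that Theorem~0.1 applies) only when $\mc O$ is Dedekind, i.e.\ the maximal order $\mc O_E$; this is precisely why the isogeny reduction must be performed \emph{before} invoking Theorem~0.1, and you should say so. Second, after replacing $\phi$ by an isogenous $\phi'$ with $\End_{\ol F}(\phi') \cong \mc O_E$, the argument gives openness in $\prod_\ell \Centr_{\GL_r(A_\ell)}(\mc O_E)$, while the target in the statement is $\prod_\ell \Centr_{\GL_r(A_\ell)}(\mc O)$. These are commensurable --- both are unit groups of $A_\ell$-orders in the same commutant $F_\ell$-algebra $\Centr_{M_r(F_\ell)}(E \otimes_F F_\ell)$, and they coincide for all $\ell$ prime to the conductor --- but the commensurability is the content of your ``bookkeeping at conductor primes'' and should be stated, not absorbed silently. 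With those two lines added, the chain (isogeny $\Rightarrow$ commensurable adelic images; Theorem~0.1 for $\ot{\phi'}$ $\Rightarrow$ open image in $\prod_{\mc L} \GL_{r'}(\mc O_{\mc L})$; centralizer identification) closes the argument.
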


Now consider the diagram of continuous homomorphisms
\begin{equation}\label{eqn:OpenImageDiagram}
\begin{gathered}
  \xymatrix{
    \Gal(\ol F / F) 
    \ar[dr]
    \\
    \Gal(\ol E_1 / E_1) 
    \ar@{^{(}->}[u] \ar[r]  
    & 
    \prod_{\ell}
    \Centr_{\GL_r (A_{\ell})}
      (\End_{E_1}(\phi))
  }
\end{gathered}
\end{equation}
where the vertical arrow is inclusion, and the remaining arrows are coming from the Galois representations $\rho_{\phi, F}$ and $\rho_{\phi, E_1}$. Since $\rho_{\phi, E_1}\left(  \Gal(\ol E_1 / E_1)  \right)$ has finite index in 
$
\prod_{\ell}
\Centr_{\GL_r (A_{\ell})}
  (\End_{E_1}(\phi)),
$
so does $\rho_{\phi, F} \left( \Gal(\ol F / F) \right)$. Additionally, $\Gal(\ol F / F)$ is a compact group, so $\rho_{\phi, F} \left( \Gal(\ol F / F) \right)$ must be a closed subgroup. It now follows from the general theory of topological groups that $\rho_{\phi, F} \left( \Gal(\ol F / F) \right)$ is also open. Therefore:
\begin{itemize}
\item[I.] There exists $m \in \upone A$ which completely determines the image of $\rho_{\phi, F}$; let $m_1$ be the smallest such $m$.
\item[II.] Provided $d$ is coprime to $m_1$, $F(\phi[d])$ and $F(\phi[m])$ are disjoint over $F$ whenever $m$ is composed of primes of $m_1$.
\item[III.] If $d_1$ and $d_2$ are coprime to each other and each is coprime to $m_1$, then $F(\phi[d_1])$ and $F(\phi[d_2])$ are disjoint over $F$.
\end{itemize}

Let $K$ be a finite extension of $F$ in $\ol F$. For each prime $\ell$ in $A$, consider the sequence
$$
\left\{
  \left[
    F(\phi[\ell^k]) \cap K : F 
  \right]
\right\}_{k=0}^{\infty}.
$$

By part (III), the sequence is the constant sequence $1,1,\ldots$ for all but finitely many primes $\ell$. For all primes $\ell$, the sequence attains its maximum at some finite value of $k$, since the sequence is supported on positive integers dividing $[K:F]$. For each prime $\ell$, define $k(K,\ell)$ to be the smallest positive integer such that 
$$
\left[
  F(\phi[\ell^{k(K,\ell)}]) \cap K : F 
\right]
=
\left[
  F(\phi[\ell^{k'}]) \cap K : F 
\right]
\; \text{for all} \; k' \geq k(K,\ell),
$$
and define
$$
m(K):= \prod_{\ell} \ell^{k(K,\ell)}
$$
(by the discussion preceding these definitions, this is well-defined). In particular it is clear that, for all $m$ such that $m(K) \mid m$
$$
F(\phi[m]) \cap K = K;
$$
also, for all $d$ coprime to $m(K)$, 
$$
F(\phi[d]) \cap K = F.
$$
In our setting, we take $K$ to include $E_1$ and to include constant field extensions generated by torsion points of $\phi$. The following fact ensures that such a $K$ can be taken to be a finite extension of $F$.

\begin{lemma}[{\cite[Remark 7.1.9, p. 196]{Goss1996}}]\label{fact:GossConstantField}
Let $\mc K$ be the field generated over $F$ by the $n$-torsion points of $\phi$ for all $n \in A$. Then, $\F_{\phi} := \mc K \cap \ol \F_q$ is a finite extension of $\F_q$.
\end{lemma}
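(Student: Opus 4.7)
The plan is to show that the degree $s_n := [F(\phi[n]) \cap \ol{\F_q} : \F_q]$ is bounded above by a constant depending only on $\phi$, uniformly in $n \in A$. Since the finite subfields of $\ol{\F_q}$ form a chain under divisibility, and $\F_\phi$ is the union of the $F(\phi[n]) \cap \ol{\F_q}$ as $n$ ranges over $A$, a uniform bound on $s_n$ immediately yields that $\F_\phi$ is itself a finite extension of $\F_q$.

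First I would reduce the problem to analyzing the determinant character $\det \rho_{\phi,n}: \Gal(F(\phi[n])/F) \to (A/nA)^*$. The subfield $F \cdot \F_{q^{s_n}}$ of $F(\phi[n])$ corresponds to a cyclic quotient of the Galois group coming from restriction to constants; since this quotient is abelian, it factors through the abelianization of the image of $\rho_{\phi,n}$. Using that $q$ is odd and assuming the prime factors of $n$ are not too small, $\SL_r(A/nA)$ equals its own commutator, so this abelianization is captured by $\det \rho_{\phi,n}$, and $s_n$ is thus controlled by the image of the determinant character. The exceptional $n$ of small residue characteristic can only contribute finitely many additional constant field extensions, which is harmless for the conclusion.

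Next I would identify $\det \rho_{\phi,n}$ with the Galois representation on the $n$-torsion of a rank $1$ Drinfeld $A$-module $\wedge^r \phi$ over $F$, via Anderson's top exterior power construction. For any rank $1$ Drinfeld $A$-module $\psi$ over $F$, the torsion extensions $F(\psi[n])/F$ are ramified only at primes dividing $n$ and at the place $\infty$, and the inertia subgroup at $\infty$ has index in $\Gal(F(\psi[n])/F)$ bounded by the order of the sign character of $\psi$. Since constant field extensions of $F$ are unramified everywhere, including at $\infty$, the constant field of $F(\psi[n])$ must embed into the fixed field of inertia at $\infty$, whose degree over $F$ is bounded independently of $n$. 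Applying this to $\psi = \wedge^r \phi$ gives the desired uniform bound on $s_n$.

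The main technical obstacle is establishing the existence and properties of the top exterior power $\wedge^r \phi$, and carrying out the local analysis at $\infty$ for rank $1$ Drinfeld modules. An alternative route that avoids the exterior power construction altogether is to analyze the inertia at $\infty$ directly for $\phi$: the local theory of Drinfeld modules at the place $\infty$ (where $\phi$ has bad reduction) shows that inertia acts on $\phi[n]$ through a subgroup of $\GL_r(A/nA)$ whose index is bounded in terms of $\phi$ alone. Since any constant field extension inside $F(\phi[n])$ must be fixed by inertia at $\infty$, this immediately yields the uniform bound on $s_n$ and hence the lemma.
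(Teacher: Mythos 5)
The paper does not actually prove this lemma; it simply cites it to Goss's book (Remark 7.1.9). So your attempt is being evaluated on its own terms rather than against an in-text argument.

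There is a genuine gap at the crux of the argument. You assert that for a rank~$1$ Drinfeld module $\psi$ over $F$, ``the inertia subgroup at $\infty$ has index in $\Gal(F(\psi[n])/F)$ bounded by the order of the sign character of $\psi$,'' and you then deduce that the fixed field of $I_\infty$, and hence the constant subfield, has bounded degree. This has the quantifiers backwards. For the Carlitz module $C$, the place $\infty$ of $F$ is tamely ramified in $F(C[n])$ with ramification index exactly $q-1$; that is, the inertia group has bounded \emph{order} $q-1$, not bounded index. Its index is $\#(A/nA)^*/(q-1)$, which tends to infinity with $\deg n$, and the fixed field of $I_\infty$ is the ``maximal real'' subfield of the Carlitz cyclotomic field, whose degree over $F$ grows without bound. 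A general rank~$1$ Drinfeld module over $F$ is a bounded twist of $C$, so the same unboundedness persists. Thus the inclusion of the constant subfield into the fixed field of $I_\infty$, while correct, gives no uniform bound, and the proof does not go through. The ``alternative route'' in the last paragraph has the same flaw: inertia at $\infty$ for a rank~$r$ Drinfeld module is confined to a small (e.g.\ Borel-type) subgroup of $\GL_r(A/nA)$, so its image has \emph{unbounded} index, not bounded index as claimed.

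Two secondary issues. First, the reduction to the determinant character via perfectness of $\SL_r(A/nA)$ only says something in the non-CM case; in the setting actually used in this paper the image of $\hat\rho_{\phi,n}$ lands in $(\mc O/n\mc O)^*$, which is already abelian, so that step does not simplify anything (though it also does no harm). Second, and more importantly, boundedness of the constant subfield is a statement about the maximal everywhere-unramified subextension, which is the fixed field of the subgroup generated by \emph{all} inertia groups, not just inertia at $\infty$; to get a bound you would need to control that whole subgroup, and a correct argument along these lines has to account for the inertia at the primes dividing $n$ as well. Since neither of your two routes closes this gap, the proof as written is incomplete.
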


Now, take $K :=  \F_{\phi} E_1$, and define
$$
m_{\phi}:= \lcm \left(N_{E_1/F} \left(\mc M_1 \right), \mf f D, m(\F_{\phi} E_1)  \right).
$$

\begin{proposition}\label{fact:ChoseConstantNTER}
Take the notation and setting above. Let $n \in \upone A$ be coprime to $m_{\phi}$. Then, $F(\phi[n])$ is a Galois extension of $F$, disjoint from $E_1$, with constant field $\F_q$, and with Galois group
$$
\Gal(F(\phi[n])/F)) 
= 
\Gal \left( E_1 \left( \ot \phi[d] \right) / E_1 \right)
=
\left( \mc O / n \mc O \right)^*
$$
\end{proposition}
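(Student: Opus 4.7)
The plan is to lift the $n$-torsion of $\phi$ to $\mc O$-torsion of the tautological lift $\ot\phi$ over $E_1$, apply the Pink--R\"utsche open-image consequences recorded right before the statement to compute $\Gal(E_1(\ot\phi[n\mc O])/E_1)$, and then descend to $F$. The three factors of $m_\phi$ play distinct roles: $\gcd(n, N_{E_1/F}(\mc M_1)) = 1$ makes the ideal $n\mc O$ of $\mc O$ coprime to $\mc M_1$; $\gcd(n, \mf f D) = 1$ makes $\mc O_\ell$ the maximal order in $E \otimes_F F_\ell$ and keeps $\ell$ unramified in $E$ for every $\ell \mid n$, so that $(\mc O/n\mc O)^*$ really factors as the product of local units predicted by the open-image theorem; and $\gcd(n, m(\F_\phi E_1)) = 1$ forces $F(\phi[n]) \cap \F_\phi E_1 = F$.

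Applying the Pink--R\"utsche consequences (II)--(III) to $\ot\phi$ with $\mc D = n\mc O$ (coprime to $\mc M_1$) yields
$$
\Gal \left( E_1 \left( \ot \phi[n\mc O] \right) / E_1 \right) = \left( \mc O / n \mc O \right)^*.
$$
Since $\ot\phi_n = \phi_n$ and \eqref{eqn:TorsionIsomorphism} is $\Gal(\ol F/F)$-equivariant, $\ot\phi[n\mc O] = \phi[n]$ as subsets of $\ol F$, so $E_1(\ot\phi[n\mc O]) = E_1(\phi[n])$ and therefore $\Gal(E_1(\phi[n])/E_1) = (\mc O/n\mc O)^*$.

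For the descent: $F(\phi[n])/F$ is Galois as the splitting field of the separable polynomial $\phi_n$, and by Lemma \ref{fact:GossConstantField} its constant field lies inside $\F_\phi$. The intersection equality $F(\phi[n]) \cap \F_\phi E_1 = F$ therefore pins the constant field of $F(\phi[n])$ to $\F_q$ and also gives $F(\phi[n]) \cap E_1 = F$; since $F(\phi[n])/F$ is Galois, the latter already forces $F(\phi[n])$ and $E_1$ to be linearly disjoint over $F$. Linear disjointness gives the restriction isomorphism
$$
\Gal(F(\phi[n])/F) \xrightarrow{\sim} \Gal(E_1(\phi[n])/E_1) = (\mc O/n\mc O)^*,
$$
finishing the proof. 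The main obstacle is the middle step: one must confirm that the Galois-theoretic $(\mc O/n\mc O)^*$ produced by Pink--R\"utsche for the rank-one $\ot\phi$ over $E_1$ is really compatible with the Galois action on $\phi[n]$ inherited from $\rho_{\phi,F}$, and this is precisely where the conductor $\mf f$ and discriminant $D$ appearing in $m_\phi$ earn their keep by excluding all local pathologies at primes dividing $n$.
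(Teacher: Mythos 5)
Your proof is correct and follows essentially the same route the paper takes: apply the Pink--R\"utsche consequences (II)--(III) with $\mc D = n\mc O$ to compute $\Gal(E_1(\ot\phi[n\mc O])/E_1) = (\mc O/n\mc O)^*$, identify $\ot\phi[n\mc O]$ with $\phi[n]$ via the $\Gal(\ol F/F)$-equivariant isomorphism \eqref{eqn:TorsionIsomorphism}, and then descend to $F$ using the fact that $n$ coprime to $m(\F_\phi E_1)$ forces $F(\phi[n]) \cap \F_\phi E_1 = F$, which simultaneously controls the constant field and guarantees linear disjointness from $E_1$.

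One small calibration: for \emph{this} proposition, the factor $\mf f D$ of $m_\phi$ is not doing much work --- the descent already goes through once $n$ is coprime to $N_{E_1/F}(\mc M_1)$ (so $n\mc O$ is coprime to $\mc M_1$) and to $m(\F_\phi E_1)$, and the Pink--R\"utsche consequences (II)--(III) are already phrased directly in terms of $(\mc O/\mc D)^*$, not in terms of local unit factorizations. What coprimality to $\mf f D$ really buys is the identification $\mc O/n\mc O \cong \mc O_E/n\mc O_E$ (making \eqref{eqn:TorsionIsomorphism} compatible with the $\mc O$-module structure) and the clean split/inert dichotomy later exploited in Lemma \ref{fact:CanUseCongruencesNontrivial} and Propositions \ref{fact:TraceCountByCongruences}, \ref{fact:DetCountByCongruences}. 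So your ``main obstacle'' in the final paragraph is already resolved by the paper's remark accompanying \eqref{eqn:TorsionIsomorphism} together with the hypothesis $\gcd(n, \mf f) = 1$, and the remaining conditions are indeed a front-loaded convenience for the counting arguments rather than a logical necessity here. Otherwise the argument is complete.
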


\subsection{Relating matrix conditions to congruence conditions}
Let $n \in \upone A$ be coprime to $m_{\phi}$. By Proposition \ref{fact:ChoseConstantNTER} have the diagram
\begin{equation}\label{eqn:GaloisDiagramNontrivial}
\begin{gathered}
  \xymatrix{
    \Gal(E_1(\ot \phi[n]) / E_1)  
    \ar[d]^{\hat  \rho_{\ot\phi,n}} 
    \ar@{<->}[r] ^{\iota}
    &
    \Gal(F(\phi[n]) / F)
    \ar@{^{(}->}[d]^{\hat \rho_{\phi,n,F}}
    \\
    (\mc O_E / n \mc O_E)^* 
    &
    \GL_2(A / nA)
  }
\end{gathered}
\end{equation}

This diagram allows us to relate calculations of determinants and traces in $\GL_2(A / nA)$ to calculations of norms and traces in $(\mc O / n \mc O)^*$, as we describe below. Observe that there exists an element $w \in \mc O$ with the properties that $\mc O = A[w]$ and the map
\begin{align*}
\mc O &\cong A \times A \\
a+bw &\leftrightarrow (a,b),
\end{align*}
when reduced modulo $n$, is precisely the isomorphism in (\ref{eqn:TorsionIsomorphism}). Let $\gd \in \mc O$. It is known (see for instance \cite[pp.76-79]{CasselsFrohlich1986}) that multiplication by $\gd$ in $\mc O$ corresponds to the linear transformation of $A \times A$ given by a matrix $\gD \in M_2(A)$, and that this matrix satisfies
$$
N_{E/F}(\gd) = \det (\gD) = \gd \ol \gd
\; \text{and}\;
\Tr_{E/F}(\gd) = \tr (\gD) = \gd + \ol \gd;
$$
here $\ol \gd$ is the conjugate of $\gd$ under the action in $\Gal(E/F)$. Reducing modulo $\ell$ and choosing representatives suitably, we get the isomorphism
\begin{align*}
\mc O / \ell \mc O  &\cong A / \ell \times A / \ell \\
a+bw \left(\mod \ell \mc O \right) &\leftrightarrow (a (\mod \ell) ,b (\mod \ell));
\end{align*}

For any $ \gd_{\text{red}} \in \mc O / \ell \mc O$, we find a representative $\gd \in \mc O$, write its associated  matrix $\gD \in M_2(A)$ as described above, and reduce coefficients modulo $\ell$ to obtain $\gD_{\text{red}}$. Now $\gD_{\text{red}}$ satisfies 
\begin{equation} \label{eqn:ReduceToCongruencesNontrivial}
\begin{aligned}
N_{E/F}(\gd) \left(\mod \ell \right) &= \gd_{\text{red}} \ol \gd_{\text{red}}  = \det (\gD_{\text{red}}), \\
\Tr_{E/F}(\gd) \left(\mod \ell \right) &= \gd_{\text{red}} + \ol \gd_{\text{red}} = \tr (\gD_{\text{red}}).
\end{aligned}
\end{equation}

It follows that the diagram in (\ref{eqn:GaloisDiagramNontrivial}) has the following properties.

\begin{lemma} \label{fact:CanUseCongruencesNontrivial}
Take the notation and setting above, with $n \in \upone A$ such that $n$ is coprime to $m_{\phi}$. In \ref{eqn:GaloisDiagramNontrivial}, fix an element in $\Gal(F(\phi[n] / F)$ in the image of $\iota$, let its image in $\GL_2(A / \ell A)$ under $\hat \rho_{\phi,n, F}$ be $\gD_{\mathrm{red}}$, and let its image in $(\mc O / \ell \mc O)^* $ under the composition of the isomorphisms $\hat \rho_{\ot\phi,n}$ and $\iota$ be $\gd_{\mathrm{red}}$. Then, as elements of $\mc O / \ell \mc O$, 
$$
\left(
  \gd_{\mathrm{red}} - 1
\right)
\left(
  \ol \gd_{\mathrm{red}} - 1
\right)  
= 
\det (\gD_{\mathrm{red}}-1)
\; \hspace{6pt} \text{and} \hspace{6pt} \;
 \gd_{\mathrm{red}} + \ol \gd_{\mathrm{red}} = \tr (\gD_{\mathrm{red}}).
$$
\end{lemma}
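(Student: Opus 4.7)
The plan is to derive both identities directly from the compatibilities already encoded in equation (\ref{eqn:ReduceToCongruencesNontrivial}) and the underlying ring structure. The trace identity is immediate: the second line of (\ref{eqn:ReduceToCongruencesNontrivial}) says exactly that $\gd_{\mathrm{red}} + \ol\gd_{\mathrm{red}} = \tr(\gD_{\mathrm{red}})$ in $\mc O / \ell \mc O$, once $\gd_{\mathrm{red}}$ and $\gD_{\mathrm{red}}$ are identified as the images of a common Galois element under the two legs of diagram (\ref{eqn:GaloisDiagramNontrivial}). So no additional work is needed there.

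For the determinant identity, the plan is to exploit the fact that the assignment $\gd \mapsto \gD$ sending an element of $\mc O$ to the matrix of multiplication by $\gd$ in the basis $\{1, w\}$ is an $A$-algebra homomorphism $\mc O \to M_2(A)$. In particular, it sends $1 \mapsto I$, and since it is $A$-linear, it sends $\gd - 1 \mapsto \gD - I$. I would therefore apply the norm identity of (\ref{eqn:ReduceToCongruencesNontrivial}) not to $\gd$ but to the element $\gd - 1 \in \mc O$, obtaining
\[
N_{E/F}(\gd - 1) \;=\; \det(\gD - I) \;=\; (\gd - 1)(\ol\gd - 1) \quad \text{in } A.
\]
Reducing modulo $\ell$ on both sides and using that reduction commutes with the determinant (since $\det$ is a polynomial in the matrix entries) and with the involution on $\mc O$ (since the conjugate $\ol\gd$ is given by a polynomial in $\gd$ with $A$-coefficients), I obtain
\[
(\gd_{\mathrm{red}} - 1)(\ol\gd_{\mathrm{red}} - 1) \;=\; \det(\gD_{\mathrm{red}} - 1) \quad \text{in } \mc O / \ell \mc O,
\]
which is the claim.

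The only substantive point to check, rather than a true obstacle, is the verification that the reduction $\gD_{\mathrm{red}} \in \GL_2(A / \ell A)$ obtained from the Galois side of the diagram (\ref{eqn:GaloisDiagramNontrivial}) really coincides, up to conjugation, with the reduction of the multiplication-by-$\gd$ matrix $\gD$. This is precisely the content of the isomorphism (\ref{eqn:TorsionIsomorphism}) being Galois-equivariant, together with Proposition \ref{fact:ChoseConstantNTER}, which ensures for $n$ coprime to $m_\phi$ that $\hat\rho_{\phi, n, F} \circ \iota$ and $\hat\rho_{\ot\phi, n}$ describe the same action of $\Gal(F(\phi[n])/F)$ on the torsion viewed as $(\mc O / n \mc O)^*$ versus $(A / nA)^2$. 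Given this identification, the two displayed identities are then formal consequences of the $A$-algebra homomorphism $\mc O \to M_2(A)$ and the existing norm/trace comparison in (\ref{eqn:ReduceToCongruencesNontrivial}).
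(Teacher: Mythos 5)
Your proof is correct and takes essentially the same approach the paper intends; the paper states the lemma with no explicit proof, treating it as an immediate consequence of the preceding discussion and equation (\ref{eqn:ReduceToCongruencesNontrivial}), and your argument makes the implicit step explicit. The key observation you supply — that $\gd \mapsto \gD$ is an $A$-algebra homomorphism, so the norm identity can be applied to $\gd - 1$, yielding $N_{E/F}(\gd - 1) = \det(\gD - I)$ — is exactly what the paper's reduction from $\GL_2$ traces and determinants to norms and traces in $\mc O$ rests on.
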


\subsection{Counting elements in Galois Groups}\label{sec:GaloisGroupsDivisibilityII}

Let $p$ be a prime of $A$ of good reduction for $\phi$. Following \cite{Brown1992}, we say that $p$ is of supersingular reduction for $\phi$ if the endomorphism ring $\End_{\ol \F_{p}}(\phi \otimes \F_{p})$ is a maximal order in a quaternion algebra over $F$, and we say that $p$ is of ordinary reduction otherwise. The following theorem of Brown is an analogue of Deuring's criterion for supersingularity for elliptic curves (see \cite{Deuring1941}), and motivates our choice for the set of primes in consideration. 

\begin{theorem}\label{fact:SupersingularityCriterion} \cite[Lemma 2.9.3, p. 431]{Brown1992}
In the setting above, for any prime $p$ of $A$ of good reduction for $\phi$, the following are equivalent.
\begin{itemize}
\item[a.] $a_p \neq 0$
\item[b.] $p$ is of ordinary reduction for $\phi$
\item[c.] $p$ splits completely in $E$
\item[d.] Writing $\mc P_{\phi,p} = (X - \pi_p)(X- \ol \pi_p) \in \ol F[X]$, we have $F(\pi_p)= E$.
\end{itemize}
\end{theorem}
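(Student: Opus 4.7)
The plan is to prove the four equivalences by cycling through implications, with the pivot being a Deuring-style dichotomy for the endomorphism algebra $D_p := \End_{\ol{\F_p}}(\phi_p) \otimes_A F$, where $\phi_p := \phi \otimes \F_{\mf p}$ denotes the reduction. Since reduction of endomorphisms is injective at a prime of good reduction, $\mc O \hookrightarrow \End_{\ol{\F_p}}(\phi_p)$, so $E \hookrightarrow D_p$. The Drinfeld analogue of Deuring's theorem gives that $D_p$ is either $E$ itself (the ordinary case) or the quaternion algebra over $F$ ramified exactly at $p$ and $\infty$ (the supersingular case), which I will denote $B_{p,\infty}$.

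The equivalence (b) $\iff$ (c) comes from the local-global theory of quaternion algebras: a quadratic extension $E/F$ embeds into $B_{p,\infty}$ iff $E$ is non-split at every ramified place of $B_{p,\infty}$. Since $E$ is imaginary, the place at $\infty$ is automatically non-split, so the condition reduces to $p$ being non-split in $E$. Combined with the fact that $E$ must embed into $D_p$, this gives that $\phi_p$ is supersingular precisely when $p$ is non-split in $E$.

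Next, (c) $\Rightarrow$ (a) and (d): if $p$ splits, then $D_p = E$ and $\pi_p \in \mc O_E$ with $\pi_p \ol \pi_p = \mu_p$ generating $(p)$ in $A$. In $\mc O_E$, the factorization $(p) = \mf p \cdot \ol{\mf p}$ has $\mf p \neq \ol{\mf p}$, forcing $(\pi_p) \in \{\mf p, \ol{\mf p}\}$, a non-Galois-invariant ideal. This immediately excludes both $\pi_p \in F$ (which would render $(\pi_p)$ Galois-invariant) and $\pi_p + \ol \pi_p = 0$ (which would give $(\pi_p) = (-\ol \pi_p) = (\ol \pi_p)$, again Galois-invariant). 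Hence $F(\pi_p) = E$ and $a_p \neq 0$, proving (a) and (d).

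Finally, the contrapositives $\neg$(c) $\Rightarrow$ $\neg$(a) and $\neg$(d) are the hard direction. Assume $p$ is non-split in $E$, so $\phi_p$ is supersingular and $D_p = B_{p,\infty}$. I would argue that $\pi_p$ lies in the image of $E$ inside $D_p$, using that Frobenius is natural and hence commutes with the reduction of every element of $\mc O$, combined with the fact that a maximal subfield of a quaternion algebra is its own centralizer. Once $\pi_p \in E$, the Galois-invariant factorization $(\pi_p)(\ol \pi_p) = (p)$ in $\mc O_E$ forces $(\pi_p) = (\ol \pi_p)$, hence $\ol \pi_p = u \pi_p$ for some unit $u \in \mc O_E^*$; case-analysis on $u$ using the Weil bound $|\pi_p| = |p|^{1/2}$ pins down $u = -1$, giving $a_p = 0$ and $F(\pi_p) = F(\sqrt{-\mu_p}) \neq E$. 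The main obstacle is precisely this last step — pinning down $\pi_p$ inside the quaternion algebra via the centralizer argument and controlling the unit $u$ — which is the Drinfeld-module analogue of the classical supersingular-CM computation and requires the Frobenius-valuation analysis special to Brown's setting.
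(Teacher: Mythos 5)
The paper does not prove Theorem~\ref{fact:SupersingularityCriterion}; it is stated as a direct citation of Brown's Lemma~2.9.3 with no argument supplied, so there is no in-paper proof against which to compare yours.

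Judged on its own, your proposal contains a circularity that goes beyond the gap you flag at the end. For (b)~$\iff$~(c) you appeal to the quaternion embedding criterion: $E$ embeds into $B_{p,\infty}$ iff $E$ is non-split at every ramified place, which (since $E$ is imaginary) reduces to $p$ being non-split. Combined with the unconditional embedding $E \hookrightarrow D_p$, this gives exactly one implication: if $\phi_p$ is supersingular then $D_p = B_{p,\infty}$, hence $E$ embeds there, hence $p$ is non-split. It does \emph{not} give the converse; that $E$ \emph{could} embed into $B_{p,\infty}$ when $p$ is non-split does not force $D_p$ to be the quaternion algebra rather than $E$ itself. Yet your final paragraph opens with ``Assume $p$ is non-split in $E$, so $\phi_p$ is supersingular,'' which invokes precisely the unproven direction. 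To close the loop one must show directly that ordinary reduction forces $p$ to split: in the ordinary case $D_p$ is commutative, so $F(\pi_p) = E$ and $\pi_p \in \mc O_E$ with $|\pi_p| = |p|^{1/2}$ and $(\pi_p)(\ol\pi_p) = (p)$; then $p$ inert would force $(\pi_p) = (p)$ against the valuation bound, and $p$ ramified would force $(\pi_p) = (\ol\pi_p)$, which (since the unit group of $\mc O_E$ is the constants) forces $a_p = 0$ and $F(\pi_p) = F\left(\sqrt{-\mu_p}\right) \neq E$. Beyond that, the centralizer step that you use to place $\pi_p$ inside $E$ in the supersingular case, and the unit bookkeeping that pins $\ol\pi_p = -\pi_p$, are the right shape but, as you note, depend on Brown's valuation analysis of $\pi_p$ and are not carried out here.
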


Therefore, we restrict our attention to those primes $p$ which split completely in $E$; for the primes which are inert in $E$, the study of values of $\varpi_p$ reduces more or less to the study of shifted primes, while the study of values of $a_p$ becomes even less interesting. With this in mind, let $d$ be a cube-free element of $\upone A$ such that $(d, m_{\phi})=1$. We want to estimate the quantity
$$
\# \{
p \in \mf P_{\phi}, \deg p = x: p \; \text{splits completely in } \; E, d \mid a_p
\},
$$
and the analogous quantity for $\varpi_p$. From here on, for $n \in A$ we write $G(n)$ to denote $\Gal(F(\phi[n]))$.

Suppose that $E \neq \F_{q^2}(T)$. Then, the condition of splitting completely in $E$ is independent of the degree of $p$; since the field of constants of $F(\phi[d])$ is $\F_q$, Proposition \ref{fact:DrinfeldEffectiveChebotarev} yields
\begin{align*}
\# &\{
  p \in \mf P_{\phi}, \deg p = x: p \; \text{splits completely in } \; E, d \mid a_p
\} \\
&=
\left\{
  p \in \mf P_{\phi}, \deg p = x:
  \left(
    \frac{p}{E/F}
  \right)
  =1,
  \tr \left(
    \hat \rho_{\phi,d}
    \left(
      \frac{p}{F(\phi[d])/F}
    \right)
  \right)=0
\right\}  \\
&=
\frac{1}{2} \gd^{\mathrm{tr}}(d) \frac{q^x}{x}
+ O_{\phi} \left(
  \left(
    \gd^{\mathrm{tr}}(d) |G(d)|
  \right)^{1/2}
  \frac{q^{x/2}}{x}
  \deg d
\right),
\end{align*}

where 
$$
\gd^{\mathrm{tr}}(d)
=
\frac{
  \# \{
    \gs \in G(d): \tr \left(\hat \rho_{\phi,d} (\gs) \right) = 0
  \}
}
{
|G(d)|
}.
$$

Now suppose $E = \F_{q^2}(T)$. Then, Proposition \ref{fact:DrinfeldEffectiveChebotarev} yields
\begin{equation}\label{eqn:CaseEConstantExtension}
\begin{aligned}
\left\{
  p \in \mf P_{\phi}, \deg p = x:
  \left(
    \frac{p}{E/F}
  \right)
  =1,
  \left(
    \frac{p}{\F_{\phi}(T)/F}
  \right)
  \mid_{\F_{\phi}}
  =
  (\Frob_q)^x,
  \tr \left(
    \hat \rho_{\phi,d}
    \left(
      \frac{p}{F(\phi[d])/F}
    \right)
  \right)=0
\right\} \\
=
\left\{\begin{array}{ll}
0 & \text{if $x$ odd,}\\ 
\gd^{\mathrm{tr}}(d) \frac{q^x}{x}
+ O_{\phi} \left(
  \left(
    \gd^{\mathrm{tr}}(d) |G(d)|
  \right)^{1/2}
  \frac{q^{x/2}}{x}
  \deg d
\right) & \text{if $x$ even.} 
\end{array}\right.
\end{aligned}
\end{equation}

For divisibility of $\varpi_p$ we proceed in the same manner; if $E \neq \F_{q^2}(T)$ we obtain
\begin{align*}
\# &\{
  p \in \mf P_{\phi}, \deg p = x: p \; \text{splits completely in } \; E, d \mid \varpi_p
\} \\
&=
\frac{1}{2} \gd^{\mathrm{ch}}(d) \frac{q^x}{x}
+ O_{\phi} \left(
  \left(
    \gd^{\mathrm{ch}}(d) |G(d)|
  \right)^{1/2}
  \frac{q^{x/2}}{x}
  \deg d
\right),
\end{align*}
where 
$$
\gd^{\mathrm{ch}}(d)
=
\frac{
  \# \{
    \gs \in G(d): \det \left(\hat \rho_{\phi,d} (\gs) - 1 \right) = 0
  \}
}
{
|G(d)|
}.
$$

If $E = \F_{q^2}(T)$, we obtain the analogue of (\ref{eqn:CaseEConstantExtension}), replacing $\gd^{\mathrm{tr}}$ with $\gd^{\mathrm{ch}}$. For the remainder of this section, we assume that $E \neq \F_{q^2}(T)$; the other case will follow from a similar line of reasoning. 

Since $(d, m_{\phi})=1$, the discussion in Section \ref{sec:GaloisGroupsDivisibilityII} implies that 
\begin{equation}\label{eqn:GaloisCountsViaCongruencesInO}
\begin{aligned}
\# & \{
  \gs \in G(d): \tr \left(\hat \rho_{\phi,d} (\gs) \right) = 0
\}
=
\#
\{
  \gs \in (\mc O / d \mc O)^*: \gs + \ol \gs = 0
\} \\
&\text{and}  \\
\# & \{
  \gs \in G(d): \det \left(\hat \rho_{\phi,d} (\gs) - 1 \right) = 0
\}
=
\#
\{
  \gs \in (\mc O / d \mc O)^*: (\gs - 1) (\ol\gs - 1) = 0
\}
\end{aligned}
\end{equation}

We will estimate the size of these sets in the cases where $d$ is a prime.

\begin{proposition}\label{fact:TraceCountByCongruences}
Let $\ell$ be a prime of $A$ that splits completely in $E$ and that does not divide $m_\phi$.
Then, 
\begin{itemize}
\item[I.]$
\# \left\{ 
\gs \in G(\ell): \tr \left(\hat \rho_{\phi,\ell} (\gs) \right) = 0 
\right\}
= |\ell| - 1,
$ and
\item[II.]$
\# \left\{ 
\gs \in G(\ell):  \det \left(\hat \rho_{\phi,\ell} (\gs) - 1 \right) = 0 
\right\}
= 2|\ell| - 3.
$

\end{itemize}
\end{proposition}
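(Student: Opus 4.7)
The plan is to reduce the Galois-theoretic counts to elementary counts of pairs in $\mathbb{F}_\ell \times \mathbb{F}_\ell$, using the structure theorem for $(\mathcal{O}/\ell\mathcal{O})^*$ implied by the splitting of $\ell$ in $E$.

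First, I would invoke Proposition \ref{fact:ChoseConstantNTER} and equation (\ref{eqn:GaloisCountsViaCongruencesInO}) to rewrite both quantities in terms of $(\mathcal{O}/\ell\mathcal{O})^*$:
\begin{align*}
\# \{\sigma \in G(\ell) : \tr(\hat\rho_{\phi,\ell}(\sigma)) = 0 \}
&= \#\{\sigma \in (\mathcal{O}/\ell\mathcal{O})^* : \sigma + \bar\sigma = 0\}, \\
\# \{\sigma \in G(\ell) : \det(\hat\rho_{\phi,\ell}(\sigma)-1) = 0 \}
&= \#\{\sigma \in (\mathcal{O}/\ell\mathcal{O})^* : (\sigma-1)(\bar\sigma-1) = 0\}.
\end{align*}
This step is immediate from the hypothesis $\ell \nmid m_\phi$ together with Lemma \ref{fact:CanUseCongruencesNontrivial}.

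The substantive step is identifying $\mathcal{O}/\ell\mathcal{O}$ explicitly. Since $\mathfrak{f} \mid m_\phi$, the prime $\ell$ is coprime to the conductor $\mathfrak{f}$, so the natural map $\mathcal{O}/\ell\mathcal{O} \to \mathcal{O}_E/\ell\mathcal{O}_E$ is an isomorphism that intertwines the nontrivial element of $\Gal(E/F)$. Because $\ell$ splits completely in $E$, the Chinese Remainder Theorem gives
\[
\mathcal{O}/\ell\mathcal{O} \;\cong\; A/\ell A \,\times\, A/\ell A,
\]
with conjugation acting as the coordinate swap $(a,b) \mapsto (b,a)$, and the unit group is $(A/\ell A)^* \times (A/\ell A)^*$, of order $(|\ell|-1)^2$.

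Under this identification, part (I) becomes the count of pairs $(a,b) \in \mathbb{F}_\ell^* \times \mathbb{F}_\ell^*$ with $a+b=0$; clearly $b=-a$ is forced and $a$ ranges over $\mathbb{F}_\ell^*$, giving $|\ell|-1$ elements. For part (II), the product $(\sigma-1)(\bar\sigma-1)$ corresponds to $((a-1)(b-1),(a-1)(b-1))$, which vanishes iff $a=1$ or $b=1$. Inclusion--exclusion among the units gives
\[
(|\ell|-1) + (|\ell|-1) - 1 = 2|\ell|-3,
\]
as desired. The only real obstacle, and the one I would be most careful about, is justifying the CRT identification together with the action of conjugation when $\mathcal{O}$ is a non-maximal order; here the coprimality of $\ell$ and $\mathfrak{f}$ (built into the definition of $m_\phi$) does all the work.
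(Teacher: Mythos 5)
Your proof is correct and follows essentially the same approach as the paper: reduce via equation \eqref{eqn:GaloisCountsViaCongruencesInO} to counting solutions in $(\mathcal{O}/\ell\mathcal{O})^*$, identify this with $(A/\ell A)^*\times(A/\ell A)^*$ via the split factorization of $\ell$ with conjugation acting as the coordinate swap, and count directly (for II, by inclusion--exclusion). Your explicit observation that coprimality to the conductor $\mathfrak{f}$ (guaranteed by $\mathfrak{f}\mid m_\phi$) justifies the passage to the maximal order before applying CRT is a small but worthwhile clarification that the paper's proof leaves implicit.
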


\begin{proof}
We proceed by counting solutions to congruences over $\mc O$ modulo $\ell$, using \ref{eqn:GaloisCountsViaCongruencesInO} to relate these to Galois groups. Since $\ell$ splits completely in $E$, we have $\ell \mc O = \mc{L} \ol{\mc L}$ for a prime $\mc L$ in $E$ and its conjugate under the action of $\Gal(E/F)$. Therefore,  $ \mc O / \mc L \cong \mc O / \ol{\mc L} \cong A / \ell A $ and
\begin{equation}\label{eqn:CongruenceCountSplitSetup}
\left(
\mc O / \ell \mc O 
\right)^*
\cong
\left(
\mc O / \mc L \ol{\mc L}
\right)^*
\cong
\left(
\mc O / \mc L 
\right)^*
\left(
\mc O / \ol{\mc L} 
\right)^*.
\end{equation}

To count elements in 
$
\{
  \gs \in (\mc O / \ell \mc O)^*: \gs + \ol \gs = 0
\},
$
we write $\gs \in  (\mc O / \ell \mc O)$ as 
$
\gs = \langle \gs_1, \gs_2 \rangle \in A / \ell A \times A / \ell A 
$
and the involution in $\Gal(E/F)$ becomes
$$
\langle \gs_1, \gs_2 \rangle 
\mapsto
\langle \gs_2, \gs_1 \rangle .
$$
Therefore we are free to choose $\gs_1 \in (A / \ell A)^*$, and $\gs_2 = - \gs_1$. Altogether we have $|\ell|-1$ solutions, proving (I). For (II), we note that
$$
(\gs - 1) (\ol\gs - 1) \equiv 0 \pmod{\mc \ell \ol{\mc L}}
\implies
\gs \equiv 1 (\mod{\mc L})
\; \text{  or  } \;
\gs \equiv 1 (\mod{\ol{\mc L}}),
$$
and by inclusion-exclusion we see that there are $2(|\ell|-1)-1$ solutions, as claimed.
\end{proof}

\begin{proposition}\label{fact:DetCountByCongruences}
Let $\ell$ be a prime of $A$ that is inert in $E$ and that does not divide $m_\phi$.
Then, 
\begin{itemize}
\item[I.]$
\# \left\{ 
\gs \in G(\ell): \tr \left(\hat \rho_{\phi,\ell} (\gs) \right) = 0 
\right\}
= |\ell| - 1,
$ and
\item[II.]$
\# \left\{ 
\gs \in G(\ell):  \det \left(\hat \rho_{\phi,\ell} (\gs) - 1 \right) = 0 
\right\}
= 1.
$
\end{itemize}
\end{proposition}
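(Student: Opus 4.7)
\textbf{Proof proposal for Proposition \ref{fact:DetCountByCongruences}.} The plan is to mirror the strategy of Proposition \ref{fact:TraceCountByCongruences}, translating the Galois-theoretic counts via (\ref{eqn:GaloisCountsViaCongruencesInO}) into congruence counts in $\mathcal{O}/\ell\mathcal{O}$, and then exploit the fact that now $\ell$ is \emph{inert} in $E$. The key structural observation is that inertness gives $\ell\mathcal{O} = \mathcal{L}$ with $\mathcal{O}/\mathcal{L}$ a field, namely the unique quadratic extension of $A/\ell A$; in particular $(\mathcal{O}/\ell\mathcal{O})^*$ is cyclic of order $|\ell|^2 - 1$, and the conjugation $\sigma \mapsto \bar\sigma$ coming from $\Gal(E/F)$ agrees with the nontrivial element of $\Gal\bigl((\mathcal{O}/\ell\mathcal{O})/(A/\ell A)\bigr)$, i.e.\ with the Frobenius $\sigma \mapsto \sigma^{|\ell|}$.

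For part (I), the condition $\sigma + \bar\sigma = 0$ is exactly the vanishing of $\mathrm{Tr}_{(\mathcal{O}/\ell\mathcal{O})/(A/\ell A)}(\sigma)$. Since the trace of a nontrivial finite field extension is surjective, its kernel is a one-dimensional $A/\ell A$-subspace of $\mathcal{O}/\ell\mathcal{O}$, hence has cardinality $|\ell|$. Removing $\sigma = 0$ leaves exactly $|\ell| - 1$ units satisfying $\sigma + \bar\sigma = 0$, and by Lemma \ref{fact:CanUseCongruencesNontrivial} and (\ref{eqn:GaloisCountsViaCongruencesInO}) these are in bijection with the $\sigma \in G(\ell)$ with $\tr(\hat\rho_{\phi,\ell}(\sigma)) = 0$.

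For part (II), the condition $(\sigma - 1)(\bar\sigma - 1) = 0$ is the vanishing of $N_{(\mathcal{O}/\ell\mathcal{O})/(A/\ell A)}(\sigma - 1)$. Because $\mathcal{O}/\ell\mathcal{O}$ is a field, and hence an integral domain, this forces either $\sigma - 1 = 0$ or $\bar\sigma - 1 = 0$; but $\bar\sigma = 1$ is equivalent to $\sigma = \bar{1} = 1$, since $1 \in A/\ell A$ is fixed by the Frobenius involution. The unique solution is $\sigma = 1$ (a unit), giving exactly one element in $(\mathcal{O}/\ell\mathcal{O})^*$ and, via (\ref{eqn:GaloisCountsViaCongruencesInO}), exactly one $\sigma \in G(\ell)$ with $\det(\hat\rho_{\phi,\ell}(\sigma) - 1) = 0$.

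There is no real obstacle here; the whole point of inertness is that $\mathcal{O}/\ell\mathcal{O}$ becomes a field, collapsing the norm-zero locus in (II) to a single point (in contrast to the split case in Proposition \ref{fact:TraceCountByCongruences}(II), where zero divisors gave rise to $2|\ell|-3$ solutions via inclusion-exclusion). The only point requiring care is verifying that the hypothesis $\ell \nmid m_\phi$ is what licenses the identification (\ref{eqn:GaloisCountsViaCongruencesInO}) of $G(\ell)$ with $(\mathcal{O}/\ell\mathcal{O})^*$ respecting the norm/trace, which is guaranteed by Proposition \ref{fact:ChoseConstantNTER} and Lemma \ref{fact:CanUseCongruencesNontrivial}.
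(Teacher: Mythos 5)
Your proposal is correct and follows the same strategy as the paper: reduce via (\ref{eqn:GaloisCountsViaCongruencesInO}) to counting solutions of congruences in $\mathcal{O}/\ell\mathcal{O}$, and then exploit that inertness makes $\mathcal{O}/\ell\mathcal{O}$ a field, so the norm-zero locus in (II) collapses to $\sigma = 1$. The only (cosmetic) difference is in (I), where the paper parametrizes $\sigma = a + b\omega$ with $\omega^2 \equiv \mathfrak{f}^2 D$ and reads off $2a = 0 \Rightarrow a = 0$ using that $q$ is odd, whereas you appeal to surjectivity of the finite-field trace to see that its kernel has $|\ell|$ elements; both yield the same count of $|\ell|-1$ units.
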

\begin{proof}
As before,we count solutions to congruences over $\mc O$ modulo $\ell$, using \ref{eqn:GaloisCountsViaCongruencesInO} to relate these to Galois groups. Since $\ell$ is inert in $E$, the ring $\mc O / \ell \mc O$ is a field extension of $\F_{\ell}$; in fact, it can be written as $\F_\ell[\om]$, where $\om$ and $\ol \om$ are the solutions of $X^2 = \mf f^2 D (\mod \ell)$ in the algebraic closure of $\F_{\ell}$. For (I), we parametrize $\mc O / \ell \mc O$ as
$$
\gs  = a + b \om, 
\; \text{ where } \;
a,b \in A / \ell A.
$$
With this parametrization, $\ol{a + b \om} = a + b \ol{\om}$, and
$$
\gs + \ol\gs = 0 \implies 2a=0.
$$
Since $|\ell|$ is odd, we need $a=0$ and $b \neq 0$, giving $|\ell|-1$ solutions to $\gs + \ol\gs = 0$, as claimed. For (II), we use the fact that $(\mc O / \ell \mc O)$ is a field, so 
$$
(\gs - 1)(\ol\gs - 1) = 0
\implies
\gs = 1 
\; \text{ or }
\ol \gs = 1;
$$
which is equivalent to $\gs = 1$.
\end{proof}

\section{Proof of Theorem \ref{thm:MomentsNTER} and Corollaries}

\subsection{Preliminary lemmas}
We keep the notation from the previous section. We begin by proving the following two lemmas.

\begin{lemma}\label{fact:SumDensitiesTraceNTER}
In the notation above,
$$
\sum_{\substack{\deg \ell \leq z \\ l \nmid m_\phi}}
\gd^{\mathrm{tr}}(\ell)
=
\log z 
+ O(1)
\; \text{  and  } \;
\sum_{\substack{\deg \ell \leq z \\ l \nmid m_\phi}}
\left( \gd^{\mathrm{tr}}(\ell) \right)^2
=
O(1).
$$
\end{lemma}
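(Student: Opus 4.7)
The plan is to first derive a closed-form expression for $\gd^{\mathrm{tr}}(\ell)$ that depends only on $|\ell|$ and the splitting type of $\ell$ in $E$, and then reduce the two sums to standard function-field Mertens-type computations via the Prime Polynomial Theorem.

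For the first step, note that by Proposition \ref{fact:ChoseConstantNTER} we have $|G(\ell)| = |(\mc O / \ell \mc O)^*|$, which by the Chinese Remainder Theorem is $(|\ell|-1)^2$ when $\ell$ splits in $E$ and $(|\ell|-1)(|\ell|+1)$ when $\ell$ is inert. The numerator in the definition of $\gd^{\mathrm{tr}}(\ell)$ is given by Proposition \ref{fact:TraceCountByCongruences}(I) in the split case and Proposition \ref{fact:DetCountByCongruences}(I) in the inert case, and happens to equal $|\ell|-1$ in both cases. Consequently,
$$
\gd^{\mathrm{tr}}(\ell) =
\begin{cases}
\frac{1}{|\ell|-1}, & \ell \text{ splits in } E, \\
\frac{1}{|\ell|+1}, & \ell \text{ is inert in } E,
\end{cases}
$$
and in either case $\gd^{\mathrm{tr}}(\ell) = |\ell|^{-1} + O(|\ell|^{-2})$.

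For the second step, I would use the Prime Polynomial Theorem (Theorem \ref{fact:PNTforExtensions} applied to $L = K = F$, or equivalently \cite[Theorem 2.2]{Rosen2002}): the number of monic irreducibles of degree $d$ is $\frac{q^d}{d} + O(q^{d/2}/d)$. Replacing the sum over primes by a double sum over $d \leq z$ and primes of degree $d$, and using $|\ell| = q^d$, yields
$$
\sum_{\substack{\deg \ell \leq z \\ \ell \nmid m_\phi}} \gd^{\mathrm{tr}}(\ell)
= \sum_{d=1}^{z} \frac{1}{q^d} \left( \frac{q^d}{d} + O\!\left( \frac{q^{d/2}}{d} \right) \right)
+ O\!\left( \sum_{d=1}^{\infty} \frac{1}{d q^d} \right)
= \sum_{d=1}^{z} \frac{1}{d} + O(1) = \log z + O(1),
$$
where the finitely many primes dividing $m_\phi$ contribute only $O(1)$. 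For the second sum, using $(\gd^{\mathrm{tr}}(\ell))^2 = O(|\ell|^{-2}) = O(q^{-2 \deg \ell})$ and the same counting gives
$$
\sum_{\substack{\deg \ell \leq z \\ \ell \nmid m_\phi}} \left( \gd^{\mathrm{tr}}(\ell) \right)^2
= O\!\left( \sum_{d=1}^{z} \frac{q^d / d}{q^{2d}} \right)
= O\!\left( \sum_{d=1}^{\infty} \frac{1}{d q^d} \right)
= O(1).
$$

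The substantive step is the identification of $\gd^{\mathrm{tr}}(\ell)$ as essentially $1/|\ell|$, which is already contained in the previous section's propositions; once this is in hand the remainder is purely analytic and mirrors Mertens's classical identity $\sum_{p \leq y} 1/p = \log \log y + O(1)$, with the single sum $\sum_{d \leq z} 1/d$ in the function-field setting playing the role of the doubly-logarithmic sum. No genuine obstacle arises.
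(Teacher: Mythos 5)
Your proposal is correct and follows essentially the same approach as the paper: compute $|G(\ell)| = |(\mc O/\ell\mc O)^*| = |\ell|^2 + O(|\ell|)$, use the trace counts from Propositions \ref{fact:TraceCountByCongruences}(I) and \ref{fact:DetCountByCongruences}(I) to get $\gd^{\mathrm{tr}}(\ell) = 1/|\ell| + O(1/|\ell|^2)$, and then apply the function-field Mertens estimates $\sum_{\deg \ell \leq z} 1/|\ell| = \log z + O(1)$ and $\sum_{\ell} 1/|\ell|^2 = O(1)$. You spell out the Prime Polynomial Theorem computation and record the exact closed forms $1/(|\ell|-1)$ and $1/(|\ell|+1)$, which the paper leaves implicit, and you correctly observe that both the split and inert propositions are needed for the numerator (the paper cites only the split case, though the counts coincide); these are minor expository differences, not a different route.
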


\begin{proof}
For all $\ell \nmid m_\phi$, 
$
|G(\ell)| = \left| \left(\mc O / \ell \mc O  \right)^*  \right| = |\ell|^2 + O(|\ell|);
$
therefore, by Proposition \ref{fact:TraceCountByCongruences},
$$
\gd^{\mathrm{tr}} (\ell)
= 
\frac{1}{|\ell|} 
+ O \left(
\frac{1}{|\ell|^2} 
\right).
$$

The claims now follows from the fact that
$$
\sum_{\deg \ell \leq z} \frac{1}{|\ell|} = \log z + O(1).
$$
and
\begin{equation}\label{eqn:BoundByZetaOf2}
\sum_{\deg \ell \leq z} \frac{1}{|\ell|^2} = O(1).
\end{equation}
\end{proof}

\begin{lemma}\label{fact:SumDensitiesDetNTER}
In the notation above,
$$
\sum_{\substack{\deg \ell \leq z \\ l \nmid m_\phi}}
\gd^{\mathrm{ch}}(\ell)
=
\log z 
+ O(1)
\; \text{  and  } \;
\sum_{\substack{\deg \ell \leq z \\ l \nmid m_\phi}}
\left( \gd^{\mathrm{ch}}(\ell) \right)^2
=
O(1)
$$
\end{lemma}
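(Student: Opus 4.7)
The plan is to follow the template of Lemma \ref{fact:SumDensitiesTraceNTER}, but separate the contributions of split and inert primes in $E$, since Propositions \ref{fact:TraceCountByCongruences}(II) and \ref{fact:DetCountByCongruences}(II) give asymptotically different counts in these two cases. Note that $\ell \nmid m_\phi$ forces $\ell \nmid D$, so any such $\ell$ is unramified in $E$, and split versus inert are the only possibilities.

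The first step is to compute $\gd^{\mathrm{ch}}(\ell)$ asymptotically. The cardinality $|G(\ell)| = |(\mc O/\ell\mc O)^*|$ equals $(|\ell|-1)^2$ when $\ell$ splits in $E$ and $|\ell|^2 - 1$ when $\ell$ is inert; combining these with the numerator counts from Propositions \ref{fact:TraceCountByCongruences}(II) and \ref{fact:DetCountByCongruences}(II) yields
$$
\gd^{\mathrm{ch}}(\ell) = \frac{2}{|\ell|} + O\!\left(\frac{1}{|\ell|^2}\right)
\text{ if } \ell \text{ splits in } E,
\qquad
\gd^{\mathrm{ch}}(\ell) = O\!\left(\frac{1}{|\ell|^2}\right)
\text{ if } \ell \text{ is inert in } E.
$$

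For the first sum, the inert terms contribute only $O(1)$ by (\ref{eqn:BoundByZetaOf2}). The main ingredient I need for the split contribution is the splitting-density estimate
$$
\sum_{\substack{\deg \ell \leq z \\ \ell \nmid m_\phi \\ \ell \text{ split in } E}} \frac{1}{|\ell|} = \frac{1}{2}\log z + O(1).
$$
This follows from the function-field Chebotarev density theorem applied to $E/F$ (Theorem \ref{fact:PNTforExtensions} together with Proposition \ref{fact:DrinfeldEffectiveChebotarev}), which gives that split primes of degree $x$ contribute half of the total count, with the standard parity caveat in the constant-field subcase $E = \F_{q^2}(T)$, where split is equivalent to $\deg \ell$ even and the doubled-leading-term version of the PNT recovers the same estimate. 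Partial summation together with $\sum_{\deg \ell \leq z} 1/|\ell| = \log z + O(1)$ then yields the displayed formula, and multiplying by $2$ produces the stated $\log z + O(1)$.

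For the second sum, in both the split and inert cases one has $(\gd^{\mathrm{ch}}(\ell))^2 = O(1/|\ell|^2)$, so the whole sum is bounded by a constant, again via (\ref{eqn:BoundByZetaOf2}). The only step requiring any care is the splitting-density estimate, and even that is a direct consequence of tools already developed earlier in the paper, so I expect no serious obstacle.
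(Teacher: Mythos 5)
Your proposal is correct and takes essentially the same route as the paper: decompose by splitting type in $E$, use Propositions \ref{fact:TraceCountByCongruences}(II) and \ref{fact:DetCountByCongruences}(II) to get $\gd^{\mathrm{ch}}(\ell) = 2/|\ell| + O(1/|\ell|^2)$ for split $\ell$ and $O(1/|\ell|^2)$ for inert $\ell$, then count split primes via Chebotarev and sum degree by degree. Your explicit note about the cardinalities $(|\ell|-1)^2$ versus $|\ell|^2-1$ and the parity caveat when $E = \F_{q^2}(T)$ is a slight expansion of the paper's terse treatment, but the argument and the key inputs are identical; your reference to ``partial summation'' is not really needed, since, as in the paper, one just sums the per-degree Chebotarev estimate directly.
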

\begin{proof}
The second statement follows from \ref{eqn:BoundByZetaOf2} combined with the fact that
$$
\gd^{\mathrm{ch}} (\ell) = O \left( \frac{1}{|\ell|} \right)
$$
uniformly for all $\ell$. For the first statement, we use Proposition \ref{fact:DetCountByCongruences} to write
$$
\sum_{\substack{\deg \ell = k \\ l \nmid m_\phi}}
\gd^{\mathrm{ch}} (\ell)
=
2 \sum_{\substack{\deg \ell = k \\   \left(\frac{\ell}{E/F}\right) = 1 }}
  \frac{1}{|\ell|}
+ O \left( \sum_{\deg \ell = k} \frac{1}{|\ell|^2} \right),
$$
and we apply Theorem \ref{fact:GeneralChebotarev} to write this as 
$$
\sum_{\substack{\deg \ell = k \\ l \nmid m_\phi}}
\gd^{\mathrm{ch}} (\ell)
=
  \frac{2}{q^k} \frac{q^k}{2k} + O \left(\frac{1}{q^{3k/2}} + \frac{1}{kq^k}  \right).
$$
The first statement now follows by summing over suitable $k$.

\end{proof}

\subsection{Computing the first moment}

In what follows, we work out the details for $\nu(a_p)$; the computations for moments of $\nu(\varpi_p)$ are essentially the same. Define the set of primes
$$
\mc A :=
\left\{
  p \in \mf P_{\phi}, \deg p = x:
  \left(
    \frac{p}{E/F}
  \right)
  =1,
  \left(
    \frac{p}{\F_{\phi}(T)/F}
  \right)
  \mid_{\F_{\phi}}
  =
  (\Frob_q)^x
\right\}
$$
As noted above, if $E = \F_{q^2}(T)$ then
$$
| \mc A | =
\left\{\begin{array}{ll}
0 & \text{if $x$ odd,}\\ 
\frac{q^x}{x}
+ O_{\phi} \left(
  \frac{q^{x/2}}{x}
\right) & \text{if $x$ even;} 
\end{array}\right. 
$$
otherwise $|\mc A| = \frac{1}{2}\frac{q^x}{x}+ O_{\phi} \left(  \frac{q^{x/2}}{x}\right)$. Assuming that we are in a case where $|\mc A| \neq 0$, we consider the sum
$$
\sum_{p \in \mc A} \nu(a_p).
$$

We fix $0 < \eta_1 < 1$ to be determined later, independent of $x$, and for $n \in A$ define
$$
\nu_1(n) := \# \{ \deg \ell \leq \eta_1 x, \ell \nmid m_\phi, \ell \mid n \}.
$$
Since $\deg a_p \leq \frac{1}{2}x$, we observe that
$$
\nu_1(a_p) = \nu(a_p) + O_{\eta_1, \phi}(1),
$$
and consequentially
$$
\sum_{p \in \mc A} \nu(a_p)
=
\sum_{p \in \mc A} \nu_1(a_p)
+ O_{\eta_1, \phi} \left( |\mc A| \right).
$$

We focus on the first sum on the right hand side; changing the order of summation and using Proposition \ref{fact:DrinfeldEffectiveChebotarev}, we have
\begin{equation}\label{eqn:FirstMomentStep2NTER}
\begin{aligned}
\sum_{p \in \mc A} \nu_1(a_p)
&=
\sum_{\substack{\deg \ell \leq \eta_1 x \\ \ell \nmid m_\phi}}
  \# \{ p \in \mc A: \ell \mid a_p \} \\
&=
|\mc A| 
\sum_{\substack{\deg \ell \leq \eta_1 x \\ l \nmid m_\phi}}
\gd^{\mathrm{tr}}(\ell)
+
O_\phi \left(
\frac{q^{x/2}}{x}
\sum_{\substack{\deg \ell \leq \eta_1 x \\ \ell \nmid m_\phi}}
  |\ell|^{1/2} \deg \ell
\right)
+
O_{\eta_1, \phi} \left( |\mc A| \right) \\
&=
|\mc A| \log x 
+
O_{\eta_1, \phi} \left( |\mc A| \right)
+
O_\phi \left(
q^{x \left(\frac{1}{2} + \frac{3 \eta_1}{2} \right)}
\right).
\end{aligned}
\end{equation}

Taking $0 < \eta_1 < \frac{1}{3}$, this is 
$$
\sum_{p \in \mc A} \nu_1(a_p)
=
| \mc A| \log x
+ 
O_{\phi} \left( |\mc A| \right),
$$
and this establishes Theorem \ref{thm:MomentsNTER} for $k=1$.

\subsection{Computing the second moments}
 As before, take $0 < \eta_2 < 1$, to be determined later, and for $n \in A$ define
$$
\nu_2(n) := \#  \{ \deg \ell \leq \eta_2 x, \ell \nmid m_\phi, \ell \mid n \}.
$$
Then, since the number of divisors of $a_p$ not counted by $\nu_2$ is bounded in terms of $\eta_2$, we can write
$$
\nu(a_p)^2 = \nu_2(a_p)^2 + O_{\eta, \phi}\left( \nu(a_p) \right).
$$
It follows that
$$
\sum_{p \in \mc A} \nu(a_p)^2
=\sum_{p \in \mc A} \nu_2(a_p)^2
+ O \left( |\mc A| (\log x) \right).
$$

We now focus on the sum on the right hand side. Expanding the square and changing the order of summation, we have
\begin{equation}\label{eqn:ExpansionHighMomentsNTER}
\begin{aligned}
\sum_{p \in \mc A} \nu_2(a_p)^2
&=
\sum_{\substack{
  (\ell_1, \ell_2) \\ 
  \deg \ell_i \leq \eta_2 x \\ 
  \ell_i \nmid m_\phi
}}
  \# \{
    p \in \mc A: \ell_i \mid a_p \; \forall i
  \}\\
&=
\sum_{\substack{
  (\ell_1, \ell_2) \\ 
  \deg \ell_i \leq \eta_2 x \\ 
  \ell_i \nmid m_\phi \\
  \ell_1 \neq \ell_2  
}}
  \# \{
    p \in \mc A: \ell_1 \ell_2  \mid a_p
  \}
+
\sum_{\substack{
  \deg \ell \leq \eta_2 x \\ 
  \ell_i \nmid m_\phi \\
}}
  \# \{
    p \in \mc A: \ell \mid a_p 
  \}
\end{aligned}
\end{equation}

We bound the second sum on the right hand side of \ref{eqn:ExpansionHighMomentsNTER}, using the arguments in the previous section, by
\begin{equation}\label{eqn:MomentsNTERPermutationStep}
\sum_{\substack{
  (\ell_1, \ell_2) \\ 
  \deg \ell_i \leq \eta_2 x \\ 
  \ell_i \nmid m_\phi \\
  \ell_1 \neq \ell_2  
}}
  \# \{
    p \in \mc A: \ell_1 \ell_2  \mid a_p
  \}
+
\sum_{\substack{
  \deg \ell \leq \eta_2 x \\ 
  \ell_i \nmid m_\phi \\
}}
  \# \{
    p \in \mc A: \ell \mid a_p 
  \}
=
O \left(
|\mc A|  (\log x)
\right)
\end{equation}

For the first sum on the right hand side of (\ref{eqn:ExpansionHighMomentsNTER}), change the order of summation and use Proposition \ref{fact:DrinfeldEffectiveChebotarev} to get

\begin{align*}
\sum_{\substack{
  (\ell_1, \ell_2) \\ 
  \deg \ell_i \leq \eta_2 x \\ 
  \ell_i \nmid m_\phi \\
  \ell_1 \neq \ell_2  
}}
  \# \{
    p \in \mc A: \ell_1 \ell_2  \mid a_p
  \}
&=
|\mc A|
\sum_{\substack{
  (\ell_1, \ell_2) \\ 
  \deg \ell_i \leq \eta_2 x \\ 
  \ell_i \nmid m_\phi \\
  \ell_1 \neq \ell_2  
}}
  \gd^{\mathrm{tr}}(\ell_1 \ell_2)
+
O_{\phi} \left(
  \frac{q^{x/2}}{x}
  \sum_{\substack{
    (\ell_1, \ell_2) \\ 
    \deg \ell_i \leq \eta_2 x \\ 
    \ell_i \nmid m_\phi \\
    \ell_1 \neq \ell_2  
  }}
    |\ell_1 \ell_2|^{1/2}
\right) \\
&=
|\mc A|
\sum_{\substack{
  (\ell_1, \ell_2) \\ 
  \deg \ell_i \leq \eta_2 x \\ 
  \ell_i \nmid m_\phi \\
  \ell_1 \neq \ell_2  
}}
  \gd^{\mathrm{tr}}(\ell_1)
  \gd^{\mathrm{tr}}(\ell_2) 
+ O_{\phi} \left(  q^{x \left(\frac{1}{2} + 3\eta_2 \right)} \right)
\end{align*}

Taking $0 < \eta_2 < \frac{1}{6}$, the $O$-term is negligible. For the main term, rewrite the sum as
$$
\sum_{\substack{
  (\ell_1, \ell_2) \\ 
  \deg \ell_i \leq \eta_2 x \\ 
  \ell_i \nmid m_\phi \\
  \ell_1 \neq \ell_2  
}}
  \gd^{\mathrm{tr}}(\ell_1)
  \gd^{\mathrm{tr}}(\ell_2) 
=
\left(
\sum_{\substack{\deg \ell \leq \eta_2 x \\ \ell \nmid m_\phi}}
  \gd^{\mathrm{tr}}(\ell)
\right)^2
-
\sum_{\substack{
  \deg \ell \leq \eta_2 x \\ 
  \ell \nmid m_\phi \\
}}
  \gd^{\mathrm{tr}}(\ell_1)^2
$$
Using Lemma using Lemma \ref{fact:SumDensitiesTraceNTER}, the second sum can be bounded by $O(1)$, and the first sum becomes $(\log x)^2 + O( \log x)$. Altogether we have
$$
\sum_{p \in \mc A} \nu(a_p)^2
=
|\mc A| (\log x)^2
+ O \left(
  |\mc A| (\log x)
\right),
$$
which proves this case of Theorem \ref{thm:MomentsNTER}.

\subsection{Proof of Corollaries \ref{thm:CentralMomentsNTER} and \ref{thm:NormalOrderNTER}}
We give the proofs for the statements about $a_p$; the exact same lines of reasoning give the statements about $\varpi_p$. Define the set of primes
$$
\mc A(x) :=
\left\{
  p, \deg p = x:
  \left(
    \frac{p}{E/F}
  \right)
  =1,
  \left(
    \frac{p}{\F_{\phi}(T)/F}
  \right)
  \mid_{\F_{\phi}}
  =
  (\Frob_q)^x
\right\}
$$
If we are in a case where $|\mc A| \neq 0$, we proceed by expanding the square to get
$$
\sum_{p \in \mc A(x)}
  \left(
    \nu(a_p) - \log x
  \right)^2
=
\sum_{p \in \mc A(x)}
  \nu(a_p)^2
-
2 \log x
\sum_{p \in \mc A(x)}
  \nu(a_p)
+
(\log x)^2
\sum_{p \in \mc A(x)}
  1.
$$

Using Theorem \ref{thm:MomentsNTER} to evaluate the sums above, we are left with
$$
\sum_{p \in \mc A(x)}
  \left(
    \nu(a_p) - \log x
  \right)^2
=
\left( 1 - 2 + 1 \right) |\mc A| (\log x)^2
+
O \left (
  |\mc A| (\log x)
\right), 
$$
and the main term vanishes, leaving us with
\begin{equation}\label{eqn:ProveCentralMomentStep}
\sum_{p \in \mc A(x)}
  \left(
    \nu(a_p) - \log x
  \right)^2
\ll
|\mc A| (\log x).
\end{equation}

To prove Corollary \ref{thm:CentralMomentsNTER}, we rewrite the summand in the expression stated and use \ref{eqn:ProveCentralMomentStep} to write
\begin{align*}
\sum_{\substack{
 \deg p \leq x \\ 
 p \; \mathrm{good}\;\mathrm{reduction}\;\mathrm{for}\;\phi \\
 p \; \mathrm{splits}\;\mathrm{in}\; E
}}
  \left(
\nu(a_p) - \log x
  \right)^2
&\ll
\sum_{z = 0}^x
  \sum_{p \in \mc A(z)}
    \left(
      \nu(a_p) - \log z
    \right)^2
+
\sum_{z = 0}^x
  \sum_{p \in \mc A(z)}
    \left(
      \log x - \log z
    \right)^2 \\
&\ll
\sum_{z = 0}^x
  | \mc A(z) |
  (\log x)
  +
  \left( \log \frac{x}{z} \right)^2
\end{align*}

Now, we use the fact that $ | \mc A(z) | \ll \frac{q^z}{z}$ and split the sum to write this as
$$
\sum_{z = 0}^x
  | \mc A(z) |
  (\log x)
  +
  \left( \log \frac{x}{z} \right)^2
\ll
\sum_{0 \leq z \leq \frac{1}{2}x}
  \frac{q^z}{z} (\log x)^2
+
\sum_{\frac{1}{2}x < z \leq x}
  \frac{q^z}{z} (\log x)
\ll
\frac{q^{x/2}}{x} (\log x)^2
+
\frac{q^x}{x} (\log x).
$$

To prove Corollary \ref{thm:NormalOrderNTER}, fix $\e > 0$ and define the set 
\begin{align*}
\mc B_k(x) := 
\left\{
  p \in \bigcup_{z \leq x} \mc A_z:
  \left|
    \nu(a_p) - (\log x)
  \right|
  >
  (\log x)^{\frac{1}{2}} (\log \log x)^{\e}  
\right\}.
\end{align*}

Then, we apply Corollary \ref{thm:CentralMomentsNTER} to write

\begin{align*}
|\mc B_k(x)|
&\ll
\frac{1}{(\log x) (\log \log x)^{2\e}}  
\sum_{p \in \mc B_k(x)}
    \left(
\nu(a_p) - (\log x)
  \right)^2 \\
&\ll
\frac{1}{(\log x) (\log \log x)^{2\e}}  
\sum_{\substack{
 \deg p \leq x \\ 
 p \; \mathrm{good}\;\mathrm{reduction}\;\mathrm{for}\;\phi \\
 p \; \mathrm{splits}\;\mathrm{in}\; E
}}
  \left(
\nu(a_p) - \log x
  \right)^{2} \\
&\ll
\frac{1}{(\log x) (\log \log x)^{2\e}} 
\frac{q^x}{x} (\log x)
\end{align*}
and Corollary \ref{thm:NormalOrderNTER} follows.

\section*{Concluding Remarks}
It is expected that the estimates obtained in Section \ref{sec:GaloisGroupsDivisibilityII} can be used to prove limiting distribution statements for $\nu(a_p)$ and $\nu(\varpi_p)$ in the form of Erd\"{o}s-Kac theorems. This and other related questions will be studied in future work by the author.

\section*{Acknowledgements}

The author thanks A. C. Cojocaru for suggesting this problem and for conversations about it, and Nathan Jones for conversations about the problem. The author also thanks an anonymous referee for pointing out a gap in an earlier version of this paper. The Number Theory Seminar at the University of Illinois at Chicago provided a venue to talk about this work and receive feedback on it. This project is part of the author's doctoral thesis at the University of Illinois at Chicago, and was completed while the author was on an Abraham Lincoln Fellowship; the author thanks the Department of Mathematics, Statistics, and Computer Science and the Graduate College at the University of Illinois at Chicago for providing conditions suitable for research.

\bibliographystyle{amsalpha}
\bibliography{drinfeldmodules}
\end{document}